\newlist{algolist}{enumerate}{3}
\setlist[algolist,1]{label=\arabic*., ref=\arabic*.}
\setlist[algolist,2]{label=(\alph*), ref=\arabic{algolisti}-(\alph*)}
\setlist[algolist,3]
{label=\roman*., ref=\arabic{algolisti}-(\alph{algolistii})-\roman*}
\crefname{algolisti}{Step}{Steps}
\crefname{algolistii}{Step}{Steps}
\crefname{algolistiii}{Step}{Steps}
\crefname{algolisti}{step}{steps}
\crefname{algolistii}{step}{steps}
\crefname{algolistiii}{step}{steps}
\DeclarePairedDelimiter\abs{\lvert}{\rvert} 
\DeclareMathOperator*{\arginf}{arg\,inf} 
\DeclareMathOperator*{\argmax}{arg\,max} 
\DeclareMathOperator*{\argmin}{arg\,min} 
\DeclareMathOperator*{\argsup}{arg\,sup} 
\providecommand{\CC}{{{C\nolinebreak[4]\hspace{-.05em}\raisebox{.4ex}{\tiny\bf 
++}}}}
\DeclarePairedDelimiter\ceil{\lceil}{\rceil} 
\providecommand{\N}{\mathbb{N}} 
\providecommand{\R}{\mathbb{R}} 
\providecommand{\Z}{\mathbb{Z}} 
\DeclarePairedDelimiter\set{\{}{\}}
\DeclarePairedDelimiterX\setc[2]{\{}{\}}{\,#1 \;\delimsize\vert\; #2\,}
\providecommand{\vc}[1]{\mathbf{#1}}
\newcommand{\B}{Bertsekas}
\newcommand{\BC}{Bertsekas and Casta\~{n}\'{o}n}
\begin{document}

\title{A real-valued auction algorithm for optimal transport\protect\thanks{This 
material is based upon work supported by the National Science Foundation 
Graduate Research Fellowship under Grant No.\ DGE--1148903.}}

\author[1]{J.D.\ Walsh III*}

\author[2]{Luca Dieci}

\authormark{Real-valued auction algorithm for optimal transport}

\address[1]{\orgname{Naval Surface Warfare Center, Panama City Division}, \orgaddress{\state{Florida}, \country{United States}}}

\address[2]{\orgdiv{School of Mathematics}, \orgname{Georgia Institute of Technology}, \orgaddress{\state{Georgia}, \country{United States}}}

\corres{*J.D.\ Walsh III, NSWC PCD (X24), 110 Vernon Ave, Panama City, FL 32407 USA. \email{joseph.d.walsh@navy.mil}}


\abstract[Summary]{Optimal transportation theory is an area of mathematics with real-world applications in fields ranging from economics
to optimal control to machine learning. We propose a new algorithm for solving discrete transport (network flow) problems,
based on classical auction methods.
Auction methods were originally developed as an alternative to the Hungarian method for the assignment problem, so the classic
auction-based algorithms solve integer-valued optimal transport by converting such problems into assignment problems.
The general transport auction method we propose works directly on real-valued transport problems.
Our results prove termination, bound the transport error, and relate our algorithm to the classic algorithms of Bertsekas and Casta\~{n}\'{o}n.}

\keywords{auction, network programming, optimization, transportation}

\jnlcitation{\cname{%
\author{Walsh J.}, and 
\author{L. Dieci}} (\cyear{2018}), 
\ctitle{General auction method for real-valued optimal transport}, \cjournal{Statistical Analysis and Data Mining}, \cvol{CoDA 2018 special issue}.}


\maketitle

\section{Introduction}
Numerical optimal transport is currently a significant and
exciting area of research.
The ability to quickly approximate discrete and semi-discrete transport problems
has important real-world applications.
Even when the original problem is continuous, existing solution methods
solve by considering a discrete or semi-discrete approximation.

At first glance, auction algorithms appear to be a good approach to solving
such discrete and semi-discrete transport problems. Unlike many other network
methods, they offer controlled approximation and bounded error results.

Furthermore, auction methods work best when applied to problems characterized
by one or both of two properties: \emph{homogeneity} and
\emph{asymmetry}~\cite[p.\ 58--59]{Bertsekas1992a}.
Homogeneity means the sinks and sources have a small range of weights, while
asymmetric problems have far more sinks than sources (or sources than sinks).
This appears promising, as many semi-discrete transport problems are
characterized by strong asymmetry.

However, real-valued transport problems are not necessarily homogeneous, and
the transport costs they use are unlikely to be
\emph{rationally-related}.\footnote{A set $S \subset \R$ is \emph{rationally-related}
if there exists $t \in \R$ such that, for all $s \in S$, $st \in \Z$.}
In particular, costs are often derived by computing the value of a $p$-norm
for some $p \neq 1$.
This lack of rational-relatedness is crucial, since existing auction methods put
integer-based
restrictions on the cost and weight data inherent to the problems they can solve.

The complexity arguments used by existing auction methods require that the 
cost of each matching is an integer.
If rational (or rationally-related) costs are required, all costs must
be transformed into integers using a common denominator.
This can impact the worst-case time complexity of the algorithm.
If non-rationally-related costs are required, as in the $p$-norm instance
described above, these auction methods offer no
guarantee of termination or convergence.

When existing auction methods are used to solve real-valued network flow or optimal 
transport problems,
then another, more significant, data restriction arises.
Existing auction methods work by converting the problem into an assignment
problem, where all weights are equal.
Hence, as with the costs, if rationally-related weights are required, the entire set of 
weights must be transformed using a common denominator.
Here, though, all weights must be unit-valued, and the common
denominator directly impacts both the storage 
requirements and the worst-case time complexity of the resulting problem.
Furthermore, because the final weights must be unitary,
if non-rationally-related weights are required,
then these methods cannot be applied at all.

We propose a more general auction method, one developed specifically for the 
transport problem (rather than one based on the assignment problem) and capable 
of handling real-valued costs and weights.
To support this extension, we also prove convergence and provide \emph{a 
priori} error bounds for transport problems with real-valued data.
Finally, we compare auction methods using a series of standard problems and 
show results indicating complexity roughly comparable to that of the original 
auction method, in cases where the latter can be applied.

\section{Background}
The term ``optimal transport'' is generally used to refer to a continuous form
known as the Monge-Kantorovich problem. One way to define the problem, derived
from~\cite[p.\ 2]{Villani2003a}, follows:
\begin{definition}[Monge-Kantorovich problem]\label{MKproblem}
Let $X$ and $Y$ be Polish spaces, let $\mu$ and $\nu$ be probability densities 
defined on $X$ and $Y$, and let $c(\vc{x},\,\vc{y}) : X \times Y \to \R$ be a 
lower semi-continuous
\emph{ground cost} function.
Define the set of \emph{transport plans}
\begin{equation}\label{MK-1}
\Pi(\mu,\,\nu) := \set*{\pi \in \mathcal{P}(X \times Y) \left|
\begin{array}{c}
\pi[A \times Y] = \mu[A],\,
 \pi[X \times B] = \nu[B] \ ,\\
 \forall \text{ meas.\ }
 A \subseteq X,\, B \subseteq Y
\end{array}
\right. },
\end{equation}
where $\mathcal{P}(X \times Y)$ is the set of probability measures on the 
product space,
and define the \emph{primal cost} function $P: \Pi(\mu,\,\nu) \to \R$ as
\begin{equation}\label{primal_cost}
P(\pi) := \int_{X \times Y} c(\vc{x},\,\vc{y})\, d\pi(\vc{x},\,\vc{y}).
\end{equation}
The Monge-Kantorovich problem is to
find the \emph{optimal primal cost}
\begin{equation}\label{MK-2}
P^* := \inf_{\pi \in \Pi(\mu,\,\nu)}\, P(\pi),
\end{equation}
and an associated \emph{optimal transport plan}
\begin{equation}\label{MK-pi-star}
\pi^* := \arginf_{\pi \in \Pi(\mu,\,\nu)}\, P(\pi).
\end{equation}
\end{definition}
\cref{MKproblem} is easily generalized. 
For larger measure spaces, assume the measures $\mu$ and $\nu$ on
$X$ and $Y$ are both multiplied by some constant $L$.
To find an optimal maximum cost, simply negate $c$.
The definition can also be applied to semi-continuous or discrete
problems, discretizing one or both measure spaces and replacing
the corresponding integral(s) with a finite sum.

Monge's original formulation assumed masses could not be split.
The relaxed form above, developed by Kanorovich, allows
splitting masses, and is typically preferred because it guarantees
the existence of a solution if the requirements above are satisfied. 
Kantorovich also identified the optimal transport problem's dual formulation.
\begin{definition}[Dual formulation]\label{KantoDual}
Define the set of functions
\begin{equation}
\Phi_c(\mu,\,\nu) := \set*{(\varphi,\,\psi) \in L^1(d\mu) \times L^1(d\nu)
\left|
\begin{array}{c}
\varphi(\vc{x}) + \psi(\vc{y}) \leq c(\vc{x},\,\vc{y}) \ ,\\
d\mu \text{ a.e.\ }
\vc{x} \in X,\,d\nu\text{ a.e.\ } \vc{y} \in Y
\end{array}
\right. }.
\end{equation}
Let the \emph{dual cost} function,
$D: \Phi_c(\mu,\,\nu) \to \R$, be defined as
\begin{equation}\label{e:dualCost}
D(\varphi,\,\psi) := \int_X \varphi \,d\mu + \int_Y \psi \,d\nu.
\end{equation}
Then, the \emph{optimal dual cost} is
\begin{equation}
D^* := \sup_{(\varphi,\,\psi) \in \Phi_c(\mu,\,\nu)}\, D(\varphi,\,\psi),
\end{equation}
and an optimal dual pair is given by
\begin{equation}
(\varphi^{*},\,\psi^{*}) := \argsup_{(\varphi,\,\psi) \in \Phi_c(\mu,\,\nu)}\, 
D(\varphi,\,\psi).
\end{equation}
\end{definition}
The continuous and semi-continuous Monge-Kantorovich problems were primarily
of interest to analysts until the
Polar Factorization theorem of~\cite{Brenier1991a} revealed deep connections
between optimal transportation and partial differential equations.
This discovery paved the way for major improvements in computational optimal
transport, such as the BFO finite-difference solver described in~\cite{Benamou2014a}
and the iterative Bregman projection method proposed in~\cite{Cuturi2013a}.
For a more detailed overview of these developments, see~\cite{Peyre2019a}.

These new approaches come with new limitations, needed to satisfy the
well-posedness requirements of partial differential equation methods.
These limitations are formally expressed by the Ma-Trudinger-Wang conditions
described in~\cite{Ma2005a}. In practice the MTW conditions are satisfied by the
use of strictly convex ground cost functions, almost exclusively limited to the
squared Euclidean distance.

For this reason, there remains a great deal of interest in discrete optimal transportation
methods. All optimal transport solvers require some form of discretization, so the
trade-off can generally be seen as exchanging some generality of measure for
certain assumptions of continuity. For one approach to this trade-off, which combines
discrete methods with limited continuity assumptions to solve
semi-discrete transport problems with $p$-norm ground costs, see~\cite{Dieci2019a}.

The study of computational discrete optimal transportation has its origins in linear
programming, particularly the simplex method developed by George Dantzig
in the 1940s; for example, see~\cite{Dantzig1951a}. Prior to the publication of
the auction method, most discrete transportation algorithms had their origins
in linear programming.

The auction method, first proposed by Dimitri \B{} in the late
1970s~\cite{Bertsekas1981a}, uses a relaxation approach derived from techniques
for solving partial differential equations.
\B{} developed the method for the assignment problem, as an alternative to the 
Hungarian method~\cite{Kuhn1955a}.
In 1989 \BC{} extended the original auction method to solve minimal cost 
flow and optimal transport problems by taking into account what they call 
``similar persons and objects''~\cite{Bertsekas1989a}.
Their extended auction method decomposes the transport problem into an 
equivalent assignment problem by splitting each supply and demand vertex into 
multiple identical vertices of unitary weight. The number of copies is equal to 
the supply or demand ``weight.'' The extended auction method then solves the 
assignment problem and combines the resulting assignments to provide the desired 
transport solution.
When all costs and weights are integers, these two auction methods offer
worst-case error and complexity bounds, as well 
as conditions under which an optimal solution is guaranteed.
The auction method's potential application to generalized
optimal transport problems is already known to the wider computational
optimal transport community;
e.g., the discussion in~\cite{Merigot2013a}.

With these factors in mind, now consider
the discrete real-valued optimal transport problem itself.
We will use this common frameworks to unify notation and interrelate key concepts
as we briefly describe the various auction methods.
When algorithmic restrictions limit us to a special-case of the transport 
problem, we describe the required conditions.

\subsection{Transport problem}
\label{sn:trans_prob}
Consider the transport problem $\mathcal{T}$, which we define using
linear programming:
\begin{definition}[Discrete Optimal Transport]\label{DiscTransport}
Suppose we are given a \emph{demand} vector $\set{d_i}_{i=1}^M$ and a
\emph{supply} vector $\set{s_j}_{j=1}^N$, whose demand coefficients $d_i$ and 
supply coefficients $s_j$ are positive scalars such that
\begin{equation}\label{eq:tp_defL}
L := \sum_{i=1}^M d_i = \sum_{j=1}^N s_j > 0.
\end{equation}
We refer to $L$ as the \emph{total weight} of the transport problem.
In the underlying transport graph, the vertex $i$ associated with demand 
coefficient $d_i$ is a \emph{sink}, and the vertex $j$ associated with supply 
coefficient $s_j$ is a \emph{source}.
We denote the set of sinks by $I$, and the set of sources by $J$.

Furthermore, suppose for each sink $i$ we are given the nonempty set $A(i)$ of 
sources to which the sink $i$ is adjacent.
The set of all possible transport pairs is equal to
\begin{equation}
\mathcal{A} := \setc*{(i,\,j)}{j \in A(i),\, i \in \set{1,\,\ldots,\,M}\,}.
\end{equation}
Thus, $\mathcal{A}$ is the set of arcs of the underlying transport graph, a 
bipartite graph with $M+N$ vertices and $\abs{\mathcal{A}} \leq MN$ arcs.

For each $(i,\,j) \in \mathcal{A}$, let $c_{ij} < 0$ be the given \emph{cost 
coefficient} (or simply \emph{cost}).
Our goal is to
\begin{subequations}
\begin{alignat}{2}
\renewcommand{\arraystretch}{1.8}
\text{ maximize }
&\quad
\sum\limits_{(i,\,j) \in \mathcal{A}} c_{ij}f_{ij}
& &
\label{eq:transMAX}
\\
\text{ subject to }
&
\sum\limits_{\setc{j}{j \in A(i)}} f_{ij} = d_i
& &
\forall\, i \in \set{ 1,\,\ldots,\,M },
\label{eq:transDi}
\\
&
\sum\limits_{\setc{i}{j \in A(i)}} f_{ij} = s_j
& &
\forall\, j \in \set{ 1,\,\ldots,\,N },\text{ and }
\label{eq:transSj}
\\
&
0 \leq f_{ij} \leq \min\set{d_i,\,s_j}
& &
\forall\, (i,\,j) \in \mathcal{A}.\label{eq:transQij}
\end{alignat}
\end{subequations}
We refer to $f_{ij}$ as the \emph{flow} along $(i,\,j)$, because it gives the 
amount transported (i.e.\ ``flowing'') from source $j$ to sink $i$.
\end{definition}
We have assumed negative costs and formulated the transport problem 
as a maximization problem,
in accordance with the standard
implementation of the auction method.
Because $c_{ij} < 0$, the maximization equation \cref{eq:transMAX} provides a 
minimum overall cost (obtained by reversing the sign on each $c_{ij}$). We 
refer to
\begin{equation}
\sum\limits_{(i,\,j) \in \mathcal{A}} c_{ij}f_{ij}
\end{equation}
as the \emph{primal cost}.
The solution to \cref{eq:transMAX} is called the \emph{optimal primal 
cost}, or \emph{optimal cost}, of the transport problem, and is denoted by 
$P^*$. If at least one set of flows $\set{ f_{ij} }$ exists such that
\cref{eq:transDi,eq:transSj,eq:transQij} are satisfied,
the optimal cost $P^*$ exists and is unique.

In contrast, a set of flows $\set{ f_{ij}^* }$ that achieves $P^*$ is almost never
unique. See~\cite{Cuesta1993a} for a condition that is sufficient to ensure
the uniqueness of the set of flows\footnote{The condition guarantees uniqueness a.e., but
for discrete measures uniqueness a.e.\ is equivalent to actual uniqueness}.
However, a set of flows $\set{ f_{ij}^* }$ that achieves $P^*$ is also bijective,
in the following sense:
suppose the direction of flow is reversed, which is equivalent to a transport problem
$\mathcal{T}'$ with demand vector $\set{s_i}_{i=1}^N$ and supply vector
$\set{d_j}_{j=1}^M$.
For all $i \in \N_N$ and $j \in \N_M$, define the flow $f_{ij}' = f_{ji}^*$.
Then $\set{ f_{ij}' }$ achieves the optimal transport cost for $\mathcal{T}'$,
and that optimal transport cost equals $P^*$.

\subsection{Transport plan}
A \emph{transport plan} (or \emph{transport map}) $T$ is a multiset of triples
$(i,\,j;\,q_{ij})$ such that $(i,\,j) \in \mathcal{A}$ and
the transported \emph{quantity}, $q_{ij}$, is non-negative.
Note that $T$ may be empty.
While the elements of $T$ are not necessarily unique, for each $(i,\,j) \in 
\mathcal{A}$ we can compute the unique flow $f_{ij}$ given by $T$ as
\begin{equation}
f_{ij} = \sum_{\setc{(k,\,l;\,q_{kl}) \in T}{(k,\,l) = (i,\,j)}} q_{kl}.
\end{equation}
In order to apply the plan to our transport problem, we require that $T$ 
satisfies $f_{ij} \leq \min\set{d_i,\,s_j}$ for all $(i,\,j) \in \mathcal{A}$.
By a minor abuse of notation, we may say $(i,\,j) \in T$ to signify that 
$(i,\,j;\,q_{ij}) \in T$ for some $q_{ij} > 0$.
We may also say $T \in \mathcal{T}$ to refer to some 
transport  plan $T$ associated with the transport problem $\mathcal{T}$.

Given any transport plan $T$, we say that sink $i$ is \emph{satisfied} if
\begin{equation}
\sum\limits_{\setc{q_{ij}}{(i,\,j,\,q_{ij}) \in T}} q_{ij} = d_i.
\end{equation}
Otherwise, we say that $i$ is \emph{unsatisfied}. (Alternatively, we may say 
$i$ has unsatisfied demand $D_i$, where $0 \leq D_i \leq d_i$.)

Similarly, when
\begin{equation}
\sum\limits_{\setc{q_{ij}}{(i,\,j,\,q_{ij}) \in T}} q_{ij} = s_j,
\end{equation}
we say the source $j$ is \emph{unavailable}.
Otherwise, we say that $j$ is \emph{available}, or that $j$ has available 
supply 
$S_j$, where $0 \leq S_j \leq s_j$.

A transport plan is said to be \emph{feasible}
or \emph{complete} when all sinks are satisfied; otherwise the plan is called
\emph{partial}\footnote{Linear programming
terminology used to indicate the status of~\cref{eq:transDi,eq:transSj,eq:transQij};
for more background, see ``Network Flow Problems,'' Part III of~\cite{Chvatal1983a}.}.

If $T$ is such that the pair $(i,\,j)$ appears at most once, and 
$(i,\,j;\,q_{ij}) \in T$ implies $q_{ij} > 0$, we refer to $T$ as a 
\emph{simplified} transport plan. In this case $q_{ij} = f_{ij}$, and we may 
refer to flow and quantity interchangeably.
Simplified transport plans, while not strictly necessary, greatly improve
clarity of notation.
We will generally assume $T$ is simplified when stating definitions and proofs.

\subsection{Dual problem}
We can write the dual transport problem as
\begin{equation}
\min_{u_i,\,p_j} \set*{ \sum_{i=1}^M d_iu_i + \sum_{j=1}^N s_jp_j },
\end{equation}
with the restriction that $u_i + p_j \leq c_{ij}$ for all $(i,\,j) \in 
\mathcal{A}$.
We call the dual variable $p_j$ a \emph{price} of $j$, and the vector
$p = \set{p_j}_{j=1}^N$ a price vector of $\mathcal{T}$.
Assume $p_j \geq 0$ for all $j$.

Given some price vector $p$, the \emph{expense} associated with the arc 
$(i,\,j) \in \mathcal{A}$ is
\begin{equation}
x_{ij} := c_{ij} - p_j
\end{equation}
and the \emph{expense} for the sink $i$ is
\begin{equation}\label{eq:expn}
x_i := \max_{j \in A(i)} x_{ij} = \max_{j \in A(i)} \set{ c_{ij} - p_j }.
\end{equation}
Because $c_{ij} < 0$, the maximization of the expense $x_i$ actually generates 
the least overall expense (obtained by reversing the signs of the $x_{i}$s).

Suppose we have a simplified complete transport plan $T$ and a price vector $p$.
From linear programming theory, we know $(T,\,p)$ is simultaneously primal and 
dual optimal if and only if
\begin{equation}
u_i = \max_{k \in A(i)} \set{ c_{ik} - p_k } = c_{ij} - p_j
\quad\quad
\forall\,
(i,\,j;\,f_{ij}) \in T.
\end{equation}
In other words, the expense for each sink is minimized by transport to the 
least expensive source(s).
This is known as the \emph{complementary slackness condition}, or 
\emph{complementary slackness}.

Among other things, complementary slackness implies that the dual problem can 
only be minimized when $u_i = x_i$ for all $i$.
Thus, we can view the prices $p_j$ as the only variables in our dual problem, 
which we can rewrite as
\begin{equation}\label{eq:dualMIN}
\min_{p=\set{p_j}_{j=1}^N}
\set*{ \sum_{i=1}^M d_i\max_{j \in A(i)} \set{c_{ij} - p_j}
+ \sum_{j=1}^N s_jp_j }.
\end{equation}
Because $c_{ij} < 0$, the minimization given in \cref{eq:dualMIN} provides a 
maximum overall profit (obtained by reversing the sign of each $c_{ij}$).
We refer to
\begin{equation}
\sum_{i=1}^M d_i\max_{j \in A(i)} \set{c_{ij} - p_j} + \sum_{j=1}^N s_jp_j
\end{equation}
as the \emph{dual profit}.
The solution to \cref{eq:dualMIN} is called the \emph{optimal dual 
profit}, or \emph{optimal profit}, of the transport problem, and is denoted by 
$D^*$.

\subsection{Auction fundamentals}
The auction method solves the dual problem described above, using a relaxation
technique inspired by the open ascending price process commonly used for 
real-world auctions.
Each sink $i$ is a \emph{bidder} in the auction, seeking to satisfy its 
demand $d_i$. Each source is a \emph{lot} containing the supply $s_j$.
Each unsatisfied bidder $i$ offers a \emph{bid amount}, $b_{ij}$, for some lot 
$j$.
Naturally, the bid $b_{ij}$ must exceed the current price, given by $p_j$.
The bidders each want to minimize their \emph{loss}: the quantity
\begin{equation}
c_{ij} - b_{ij},
\end{equation}
representing the cost-price total for bidder $i$ to obtain lot $j$.
The best possible loss for bidder $i$ is the amount closest to 
zero, as given by the negative scalar
\begin{equation}
\max_{j \in A(i)} \set{c_{ij} - b_{ij}}.
\end{equation}

Each bid can be ``outbid''; that is, superseded by another bidder offering a
higher price.
So long as $b_{ij}$ is greater than the highest previous price for lot $j$, 
designated $p_j$, we know that some bidder (either $i$ or a competitor) 
will claim lot $j$.
Once prices are sufficiently high, lot costs becomes irrelevant, so every lot 
eventually receives a bid.
At that point, the auction ends.

Knowing that we require $b_{ij} > p_j$, the natural question to ask is: how much
larger than $p_j$ should we make $b_{ij}$? As \B{} so eloquently explains 
in~\cite[p.\ 29--30]{Bertsekas1998a}, 
we need to set a minimum \emph{bidding increment}, or \emph{step size}: some
$\varepsilon>0$, such that $b_{ij} \geq p_j + \varepsilon$. Otherwise, the
auction risks stalling if two options are equally optimal.

The assignment problem, as formulated for the original auction method, 
assumes that we have $N$ sources and $N$ sinks, each of which has weight 1, and 
that all costs are integer-valued.
Thus, we can assume that the auction method for the assignment problem operates 
on a special case of our transport problem: one with integer costs and unit 
weights, where $M=N$.
Applying the terminology used by \B{}, we call a sink of weight 1 a 
\emph{person} and a source of weight 1 an \emph{object}.

\subsection{\texorpdfstring{$\varepsilon$}{ε}-complementary slackness}
We can relax the complementary slackness condition, allowing flow from sources
to sinks whenever the loss comes within $\varepsilon$ of attaining the 
maximum.
This is called \emph{$\varepsilon$-complementary slackness}, or 
\emph{$\varepsilon$-CS}, and it can be considered for any transport plan, 
complete or not.

Formally: Given some $\varepsilon > 0$, a simplified transport plan $T$ and 
price vector $p$ satisfy $\varepsilon$-complementary slackness if
\begin{equation}
x_i - \varepsilon
= \max_{k \in A(i)} \set{ c_{ik} - p_k } - \varepsilon
\leq c_{ij} - p_j
\quad
\forall\,
(i,\,j;\,f_{ij}) \in T.
\end{equation}
As \B{} proved in~\cite[p.\ 255--257]{Bertsekas1998a}, the auction method 
maintains $\varepsilon$-CS, and the resulting transport plan is guaranteed to 
be 
optimal if $\varepsilon < 1/N$.

\subsubsection{\texorpdfstring{$\varepsilon$}{ε}-scaling}
\B{} found that the number of iterations of the auction algorithm depends 
on $\varepsilon$ and the number of possible values, or \emph{range}, 
that the cost can take.
When we restrict costs to the negative (or positive) integers, as \B{} does, we 
can safely assume that the cost range $C$ is equal to the maximum absolute cost,
\begin{equation}
C = \max_{(i,\,j) \in \mathcal{A}} \abs{c_{ij}}
= \max_{(i,\,j) \in \mathcal{A}} \set{-c_{ij}}.
\end{equation}
(It is possible for the cost range to be smaller; for example, when 
$\gcd\set{c_{ij}} > 1$.)
As \B{} concluded, for many assignment problems the number of iterations 
required for termination is proportional to $C/\varepsilon$~\cite[p.\ 
34]{Bertsekas1998a}.

Of course, the number of iterations is also dependent on the initial price 
vector; when the initial prices are close to ``$\varepsilon$-optimal,'' the 
number of iterations required is relatively small.
This suggests that it may be advantageous to use a scaling technique, similar 
to that used in penalty and barrier methods.
For the auction algorithm, \B{} calls this technique 
\emph{$\varepsilon$-scaling}.

To perform $\varepsilon$-scaling, we apply the auction algorithm multiple 
times.
Each iteration of the algorithm is called a \emph{scaling phase}.
In the first iteration, we use a simple set of initial prices, along with an 
initial $\varepsilon$.
For each successive iteration, we use the resulting price vector from the 
previous phase, along with an altered $\varepsilon$-value.
The $\varepsilon$-scaling technique terminates when $\varepsilon$ reaches some 
critical value.
When $\varepsilon$-scaling is used, the auction method has worst-case time 
complexity
$\mathcal{O}(NA\log(NC))$, where $A = \abs{\mathcal{A}}$, the number of arcs 
in the underlying transport graph~\cite[p.\ 265]{Bertsekas1998a}.

\subsection{Extended auction}
The extended auction for the integer-valued transport problem was initially 
described by \BC{} in 1989~\cite{Bertsekas1989a}.
They wanted to extend the assignment auction method to one that could handle 
transport problems.
Their idea was to transform the integer-valued transport problem into an 
assignment problem by creating multiple copies:
construct an assignment problem with a number of identical persons (or objects) 
equal to the weight at each sink (or source, respectively), preserving the 
adjacencies and costs of the original vertices.
After solving the assignment problem, redundant arcs with positive flow
could be combined to generate a simplified optimal transport plan.

When a sink or source is split into multiple copies, each with a weight of 1, the resulting 
persons 
and objects retain an identical underlying structure: the adjacencies and cost 
coefficients of the originals.
Such persons and objects are called \emph{similar}.
A \emph{similarity class} is the equivalence class of persons or objects with 
the same adjacencies and costs.
Persons and objects in the same similarity class can engage in 
protracted ``bidding wars'' unless their similarity is taken into account.

\BC{} address the similarity relationships in multiple ways.
Thus, their extended auction method is best understood as three distinct 
algorithms:
\begin{itemize}
\item
The \emph{auction algorithm for the assignment problem}, or simply 
\textbf{AUCTION}, expands the sinks and sources of the transport problem, 
creating an 
assignment problem that it solves without considering the underlying structure.
\item
The \emph{auction algorithm for similar objects}, also called the \textbf{AUCTION--SO}, 
considers the impact of similar objects when increasing prices.
\item
The \emph{auction algorithm for similar objects and persons}, or 
\textbf{AUCTION--SOP}, treats similar persons as a unit during each bidding 
phase.
At every iteration, each unsatisfied similarity class of persons bids 
collectively 
for a number of objects equal to its total demand $d_i$.
Each similarity class of persons shares a single price increase, determined 
similarly to the technique of the AUCTION--SO.
\end{itemize}
For all three algorithms, the costs and weights must be integers. If 
$\varepsilon < 1/\min\set{M,\,N}$, the solution resulting from any of these is 
guaranteed to be optimal~\cite[p.\ 85]{Bertsekas1989a}.

\subsection{Factors motivating the general auction}
The motivations behind proposing the general auction method
are best understood by contrasting it with the 
assignment auction method and its extensions.

All of the auction methods described here are capable of handling real-valued
cost functions without any modification.
In practice, use of real-valued costs (such as the Euclidean distance) does
not generally have a noticeable impact on performance.
However, when one or more
costs is real-valued, the complexity bounds
described in~\cite{Bertsekas1998a} and~\cite{Bertsekas1993a} no longer
apply. Hence, none of the auction methods
offer a guaranteed upper bound on complexity for transport problems
with real-valued costs.

The primary issues arise when one or more weights in $\set{d_i}$ or $\set{s_j}$
are not integer-valued.
The auction methods developed by \BC{} require that the transport problem be transformed
into an assignment problem.
That requires the existence of some common value $V \in \R$ such that $Vd_i$ and $Vs_j$ are
integers for all $i$ and $j$.
Even if such a $V$ exists, the transformation generates an assignment problem with $LV$ sources
and sinks, which must then be solved.
This transformation step can easily result in problems so large as to be unsolvable, as the following
(admittedly degenerate) example illustrates.
\begin{quotation}
Let $\set{a,\,a + 2}$ and $\set{b,\,b + 2}$ be two sets of twin primes.
Choose $M = N = 2$, and
\begin{equation}
d_1 = a,\quad\quad d_2 = b + 2,\quad\quad s_1 = b,\quad\quad s_2 = a + 2.
\end{equation}
Choose any negative-valued cost coefficients $c_{ij}$ for $i = 1,\,2$ and $j = 1,\,2$. The transport
problem is feasible, and its underlying transport graph has four vertices and four arcs.

Now
convert the transport problem to an assignment problem, as required by all of the \BC{} methods.
Because the demand and supply
quantities are relatively prime, it is impossible to ``downsize'' the resulting problem. This
means the underlying assignment graph has $2(a + b + 2)$ vertices and $(a + b + 2)^2$ arcs.
Even if the twin prime conjecture turns out to be false, the largest known twin primes
have more than 200 000 digits (in base-10 notation). Using these twin primes, a transport
problem simple enough to solve by hand becomes an assignment problem requiring more
vertices than the number of electrons in the universe. By comparison, an auction algorithm
that did not require this transformation would require far less storage space. For example,
the general auction would require no more than fourteen weight variables\footnote{Two
demand values $d_i$, two unsatisfied demand values $D_i$, two supply values $s_j$,
two available supply values $S_j$, and up to six quantities $q_{ij}$
(two for bidding and up to four for claim lists); see \cref{s:ga} for details.}.
Given an effective method for storing large integers, they would take up just
over one megabyte of computer memory.
\end{quotation}
If no such $V$ exists, then the weights in $\set{d_i}$ and $\set{s_j}$ are not
rationally-related, and the auction methods of \BC{} cannot be applied at
all\footnote{In theory, it may be possible to successfully approximate the optimal
transport solution by starting from a set of approximated weights. However, to the best of
our knowledge, no one has proposed this approach, or offered any mathematical support for
its validity.}.

In contrast to this bleak picture, the general auction method is designed from the ground up
to handle
any set of real-valued weights without special treatment.
Rather than transform the transport problem into an assignment problem, it operates
directly on the original transport problem, so the number of source and sinks, and
the corresponding difficulty, remain unchanged.
In the following sections, we consider how this is done, and experimentally evaluate the
impact on computational time, storage, and complexity.

\section{General auction for the transport problem}\label{s:ga}
Our general auction method uses the real-valued transport problem as its 
basis, rather than the integer-valued assignment problem.
(Thus, the method's name: it is designed around a more general problem than 
other auction methods.)
To avoid redundancy, we define and explain only those terms and ideas which 
differ from other auction methods.

\subsection{Description and terminology}
The general auction uses a variant form of lot bidding, similar to ``times 
the money'' bidding.
Bidder $i$ has unsatisfied demand $D_i$, so bidder $i$ makes a bid of bid 
amount $b_{ij}$ on lot $j$.
The quantity desired from lot $j$ is set to $q_{ij} = \min\set{D_i,\,s_j}$.
We can write the bid as the pair $(b_{ij},\,q_{ij})$.
The actual bid is understood to be price $b_{ij}$ per $q_{ij}$ items, for a 
total bid value of $b_{ij}q_{ij}$.

Suppose that lot $j$ has available supply $S_j$.
If $q_{ij} \leq S_j$, the desired quantity is immediately available, so bidder 
$i$ 
is awarded a \emph{claim} on lot $j$ of quantity $q_{ij}$ at bid price $b_{ij}$.
This claim, represented by the triple $(i;\, b_{ij},\, q_{ij})$, is added to 
the \emph{claim list} for lot $j$, which we denote by $C_j$.

Such claims can still be outbid.
If $q_{ij} > S_j$, we compare bidder $i$'s offer to those already on the claim 
list.
The difference between $q_{ij}$ and $S_j$ is made up by taking the required 
amount from the lowest priced claim(s) with bid price less than $b_{ij}$.
Only if insufficient low-priced claims exist will bidder $i$ claim less than 
$q_{ij}$.

Even so, as long as we ensure that $b_{ij}$ is greater than the lowest bid 
price 
on lot $j$'s current claim list, we know that bidder $i$ will be able to claim 
some quantity in lot $j$.
To guarantee the occurrence of such a claim, for each lot $j$ we must first 
determine a 
\emph{lot price} $p_j$, defined as
\begin{equation}\label{eq:ga_pj}
p_j := \min\limits_{(i;\,b_{ij},\,q_{ij}) \in C_j} \set{ b_{ij} }.
\end{equation}
(If $C_j$ is empty, let $p_j$ be equal to some initial price $p_j^0$.)
When bidding, we require that bid prices satisfy $b_{ij} \geq p_j + 
\varepsilon$ for some fixed $\varepsilon > 0$.

The lot price vector, $p = \set{p_j}_{j=1}^N$, corresponds to the price vector 
used in the dual profit equation.
Thus, like the assignment auction, the general auction attempts to solve the 
dual problem.
The general auction also uses $\varepsilon$-complementary slackness, which is 
defined identically to the assignment auction.

\subsection{Iteration}
Assume without loss of generality that $M \geq N$; that is, there are at least 
as many sinks as sources.
To initialize the general auction method, one must have a bidding step size 
$\varepsilon > 0$ and an initial lot price vector.
Once initialized, the auction is performed in iterations.
Each iteration consists of two phases: a bidding phase (\cref{sn:bidphase}) and 
a claims phase (\cref{sn:clmphase}).

\subsubsection{Bidding phase of the general auction}
\label{sn:bidphase}
Let $\tilde{I}$ be a nonempty subset of sinks $i$ that are unsatisfied under 
the current
transport plan $T$. For each sink $i \in \tilde{I}$:
\begin{algolist}
\item
Find:
\begin{algolist}
\item
The lot $j_i$ offering best expense, given by
\begin{equation}\label{eq:ga_ji}
j_i := \argmax\limits_{j \in A(i)} \set{ c_{ij} - p_j }.
\end{equation}
\item
The second-best expense, chosen by considering lots other than $j_i$,
\begin{equation}\label{eq:ga_wi}
w_i := \max\limits_{j \in A(i),\, j\neq j_i} \set{ c_{ij} - p_j }.
\end{equation}
If $j_i$ is the only source in $A(i)$, define $w_i$ to be $-\infty$.
(For computational purposes, this can be any value satisfying
$w_i \ll c_{ij_i} - p_{j_i}$.)
\end{algolist}
\item
Compute the bid
$(b_{ij_i},\,q_{ij_i})$,
where the bid price $b_{ij_i}$ is given by
\begin{equation}\label{eq:ga_bij}
b_{ij_i}
:= c_{ij_i} - w_i + \varepsilon
\end{equation}
and the quantity claimed $q_{ij_i}$ is equal to
\begin{equation}
q_{ij_i} := \min \set{ D_i,\,s_{j_i} }.
\end{equation}
\end{algolist}

\subsubsection{Claims phase of the general auction}
\label{sn:clmphase}
For each source $j$, let $I_j$ be the set of sinks from which $j$ received a 
bid in the bidding phase of the iteration.
If $I_j$ is nonempty, for each $i_j \in I_j$ with bid
$(b_{i_jj},\,q_{i_jj})$:
\begin{algolist}
\item
While $S_j < q_{i_jj}$ and $p_j \leq b_{i_jj}$:
\begin{algolist}
\item
Find the lowest-priced claim $c = (k;\,b_{kj},\,q_{kj})$ given by
\begin{equation}
c := \argmin\limits_{(i;\,b_{ij},\,q_{ij}) \in C_j} \set{ b_{ij} }.
\end{equation}
\item\label{al:HCrule}
If $k = i_j$, add $q_{kj}$ to $q_{i_jj}$. 

\quad\quad
[Hungry Cannibal rule; see \cref{sn:hcrule}]
\item
Find the quantity in $c$ to be claimed by bidder $i_j$,
\begin{equation}
q := \min\set{q_{i_jj},\,q_{kj}}.
\end{equation}
\item
Make the quantity $q$ available:
\begin{algolist}
\item
Add $q$ to $S_j$.
\item
Subtract $q$ from $q_{kj}$.
\item
If $q_{kj} = 0$, remove $c$ from $C_j$ and update $p_j$.
\item
Add $q$ to $D_k$.
\end{algolist}
\end{algolist}
\item
Let $q_{i_jj} = \min \set{ q_{i_jj},\,S_j }$, and if $q_{i_jj} > 0$:
\begin{algolist}
\item
Insert $(i_j;\,b_{i_jj},\,q_{i_jj})$ into $C_j$.
\item
Subtract $q_{i_jj}$ from $D_{i_j}$ and $S_j$.
\item
Update $p_j$.
\end{algolist}
\end{algolist}

\subsection{Resulting transport plan and cost}
If all sinks have been satisfied, the general auction terminates. The 
resulting complete transport plan is equal to
\begin{equation}
T = \bigcup_{j=1}^n
\setc{(i,\,j;\,q_{ij})}{ (i;\,b_{ij},\,q_{ij}) \in C_j }
\end{equation}
and the primal cost of $T$ is equal to
\begin{equation}
\sum_{j=1}^n \sum_{(i;\,b_{ij},\,q_{ij})\in C_j} c_{ij}q_{ij}.
\end{equation}
If we want to represent $T$ as a simplified transport plan, there 
is still one more step to perform.
Using the claim lists, we can determine the simplified flow for each $(i,\,j)$ 
as
\begin{equation}
f_{ij} := \sum_{(i;\,b_{ij},\,q_{ij}) \in C_j} q_{ij}.
\end{equation}
(If $C_j$ does not contain a claim by bidder $i$, we assume $f_{ij} = 0$.)
Using these flow values, the simplified complete transport plan $\widetilde{T}$ 
equals
\begin{equation}
\widetilde{T} := \setc{(i,\,j;\,f_{ij})}{(i,\,j) \in \mathcal{A},\,f_{ij} > 0 }.
\end{equation}
The primal cost of the simplified transport plan equals
\begin{equation}
\sum\limits_{(i,\,j) \in \mathcal{A}} c_{ij}f_{ij},
\end{equation}
which is exactly the form used in the transport problem.

\subsection{Hungry cannibals}
\label{sn:hcrule}
The \emph{Hungry Cannibal} (or \emph{HC}) rule is a modification to the process of handling claim 
lists, motivated by a potential slow-down during the iterative process.
In the bidding phase, each sink attempts to maximize the quantity it 
acquires.
However, it is possible for a sink $i$ to bid on a lot $j$ where it already has 
a claim, and for the new claim to supersede some or all of the old one.
When this happens, the old claim is ``cannibalized'' by the new one, and the 
quantity acquired by $i$ is not maximal.
(It may even be zero.)
We call any claim that supersedes an earlier claim by the same bidder
a \emph{cannibal} claim.

It is possible to speed up the bidding process, guaranteeing the maximum 
possible quantity is acquired by each bid, even when confronted by cannibal 
claims.
We can deal with cannibals by implementing the following HC rule:
if a new claim by $i$ cannibalizes a quantity $q$, then
during the claims phase $q$ is added to the quantity desired by the 
new claim.
Thus, $i$ can cannibalize itself during the claims phase, and the ``hunger'' of 
$i$ remains maximal.
As a side effect, the HC rule can unify adjacent bids by the same bidder.

It is worth pointing out that the HC rule is an optimization of the general 
auction.
The rule is not necessary to guarantee termination or bound error, but it 
can significantly speed computation.
The speedup occurs because the HC rule requires little overhead to 
implement, while reducing the number of iterations required for 
convergence.
Numerical tests suggest that the transport plan resulting from the general 
auction will be nearly identical, whether or not the HC rule is used, but that 
time improves measurably when the rule is in place.
The HC rule is labeled as \cref{al:HCrule} of the claims phase of the general 
auction.

For the proofs given below, we assume that the HC rule is in place, ensuring 
that the quantity acquired by every new claim is maximal.
This assumption helps simplify our arguments.

\section{Mathematical Results}
The proofs given in~\cite{Bertsekas1989a,Bertsekas1998a} for the assignment 
auction and its extensions rely heavily on the integral nature of the data.
Our generalization to transport problems with real-valued data requires a 
different approach.

\subsection{Termination of the general auction}
Assuming real-valued data invalidates many of the standard assumptions for 
auction algorithms.
For example, unlike \BC{} in~\cite{Bertsekas1989a}, we cannot assume that any 
quantities claimed are bounded away from zero.
Thus, we must consider the possibility that claimed quantities tend to zero as 
the number of bids goes to infinity.
We also cannot assume that lot price increases are bounded away from 
zero, so
we must consider the possibility that price increases tend to zero as the 
number of bids goes to infinity.
Finally, given the definition of lot prices as a minimum, we cannot assume that 
lot prices change at all.
We must consider the possibility that lot prices remain fixed over 
infinitely many bids.
However, our key result, \cref{th:ga_term}, conclusively shows that none of 
these possibilities can occur; the general auction behaves well when applied to 
real-valued data, and 
terminates after a finite number of iterations.
Because of the above considerations, our approach to proving termination bears 
little resemblance to that used by \BC{} in~\cite{Bertsekas1989a}.

\subsubsection{General auction prices are nondecreasing}
\begin{theorem}
\label{th:ga_pr_nondec}
When applying the general auction method with step size $\varepsilon > 0$, lot 
prices are nondecreasing and each bid price exceeds the current lot price by at 
least $\varepsilon$.
\end{theorem}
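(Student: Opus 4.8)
The plan is to establish the two assertions separately, proving the bidding-increment bound first and then leveraging it to obtain monotonicity of the lot prices. Throughout, I would fix a lot $j$ and track how its price $p_j$, defined in \cref{eq:ga_pj} as the minimum bid price over the claim list $C_j$, evolves across a single iteration; the global statement then follows by induction on the iteration count. Since all bids are computed in the bidding phase using the prices in force at the start of the iteration, I will write $\bar{p}_j$ for that starting value and aim to show $p_j \geq \bar{p}_j$ at the end of the iteration.

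For the increment bound I would argue directly from the bidding-phase definitions. By \cref{eq:ga_ji} the best lot $j_i$ realizes the maximal expense $x_i = c_{ij_i} - p_{j_i}$, so the second-best expense of \cref{eq:ga_wi} satisfies $w_i = \max_{j \in A(i),\, j \neq j_i}\set{c_{ij} - p_j} \leq c_{ij_i} - p_{j_i}$, with the convention $w_i = -\infty$ when $A(i)$ is a singleton making the bound immediate. Substituting into the bid-price formula \cref{eq:ga_bij} yields $b_{ij_i} = c_{ij_i} - w_i + \varepsilon \geq c_{ij_i} - (c_{ij_i} - p_{j_i}) + \varepsilon = p_{j_i} + \varepsilon$, which is exactly the claimed excess over the current lot price. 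This step is purely algebraic and I expect no difficulty.

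For monotonicity, the naive hope that every update of $p_j$ in the claims phase can only raise it is false: after the price has risen by cannibalizing cheap claims, a later bidder in the same phase may insert a ``stale'' bid whose price lies below the transiently elevated $p_j$, momentarily lowering it. The fix is to compare against $\bar{p}_j$ rather than step by step. I would prove the invariant that, at every moment of the claims phase, every claim in $C_j$ has bid price at least $\bar{p}_j$. The claims already present satisfy this by the minimality definition of $\bar{p}_j$, while each newly inserted claim arose from a bid placed on $j$ during the bidding phase, when the lot price was $\bar{p}_j$; by the increment bound just proved its price is at least $\bar{p}_j + \varepsilon$. Because the claims phase only removes claims and inserts such bids, never altering an existing bid price, the invariant is preserved, so $p_j \geq \bar{p}_j$ whenever $C_j$ is nonempty, and induction over iterations gives the nondecreasing conclusion.

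The one genuine obstacle is the exceptional case in which $C_j$ empties out, for then $p_j$ reverts to the initial price $p_j^0$ of \cref{eq:ga_pj}, which could in principle sit below $\bar{p}_j$. I would close this gap by showing that a claim list which starts an iteration nonempty cannot end it empty: the only mechanism that removes claims is the inner loop of \cref{sn:clmphase}, and each removal transfers the claim's quantity into the available supply $S_j$, so if that loop ever exhausts $C_j$ it leaves $S_j > 0$, forcing the current bidder's desired quantity to be positive and triggering a reinsertion. Hence an empty terminal claim list can occur only when $C_j$ was already empty at the start, in which case $\bar{p}_j = p_j^0$ and the bound $p_j \geq \bar{p}_j$ holds trivially. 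Verifying this carefully across the while-loop, the HC rule of \cref{al:HCrule}, and the sequential processing of multiple bidders is where the bulk of the bookkeeping lies, and I expect it to be the most delicate part of the argument.
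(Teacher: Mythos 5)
Your proposal is correct and takes essentially the same approach as the paper: the identical algebraic step $b_{ij_i} = c_{ij_i} - w_i + \varepsilon \geq p_{j_i} + \varepsilon$ from \cref{eq:ga_ji,eq:ga_wi,eq:ga_bij}, followed by the observation that the end-of-iteration price, being the minimum of old claim prices ($\geq p_j$) and newly inserted bid prices ($\geq p_j + \varepsilon$), cannot fall below the starting price. The paper compresses the second half into the single line $p_{j^*}' \geq \min\set{p_{j^*},\,b_{ij^*}} \geq p_{j^*}$, so your explicit invariant over the sequential claims-phase processing and your verification that a nonempty claim list cannot empty out (hence never reverts to $p_j^0$) rigorously fill in bookkeeping that the paper's proof leaves implicit.
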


\begin{proof}
Assume $\varepsilon > 0$ is given.
Fix the lot $j^*$ and let its lot price before and lot price after iteration be 
given by $p_{j^*}$ and $p_{j^*}'$, respectively.
If no sink bids on $j^*$, then the lot price of $j^*$ does not change, and we 
know that $p_{j^*}'=p_{j^*}$.
Suppose instead that some sink $i$ bids on $j^*$, with bid price $b_{ij^*}$.
From \cref{eq:expn,eq:ga_ji}, we know that the expense of sink $i$'s bid is 
given by 
\begin{equation*}
x_i = \max_{j \in A(i)} \set{c_{ij} - p_j} = c_{ij^*} - p_{j^*}.
\end{equation*}
Thus, the bid price $b_{ij^*}$ is given by
\begin{equation*}
b_{ij^*} = c_{ij^*} - w_i + \varepsilon
= p_{j^*} + x_i - w_i + \varepsilon.
\end{equation*}
By \cref{eq:ga_wi}, $x_i - w_i \geq 0$, so
\begin{equation*}\label{eq:ga_bGTp}
b_{ij^*} = p_{j^*} + x_i - w_i + \varepsilon
\geq p_{j^*} + \varepsilon
> p_{j^*}.
\end{equation*}
Since the new bid $b_{ij^*}$ is at least as high as the current lot price 
$p_{j^*}$, by \cref{eq:ga_pj},
\begin{equation*}
p_{j^*}' \geq \min\set{p_{j^*},\,b_{ij^*}} \geq p_{j^*}.
\end{equation*}
Since this is true for all $i$ that bid on $j^*$, we know $p_{j^*}' \geq 
p_{j^*}$.
Therefore, lot prices are nondecreasing.
\end{proof}

\subsubsection{Steady price implies satisfaction}
\begin{theorem}
\label{th:ga_steadysat}
If a bid by sink $i$ on lot $j$ does not increase the lot price $p_j$, then $i$ 
becomes satisfied. Furthermore, if bidder $k$ was satisfied
by lot $j$ after the last increase of $p_j$, then $k$ remains satisfied after 
$i$'s bid is resolved.
If $i$ should become unsatisfied and $p_j$ has not increased, then $i$ will
bid on a lot of price $p_j$ again.
\end{theorem}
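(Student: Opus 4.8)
The plan is to dissect the claims phase (\cref{sn:clmphase}) one bid at a time, using \cref{th:ga_pr_nondec} as the backbone, and to treat the three assertions in turn. Throughout, I would keep in mind that $p_j$ is the \emph{minimum} bid price among the surviving claims in $C_j$, and that by \cref{th:ga_pr_nondec} a bid by $i$ on $j$ has price $b_{ij} \geq p_j + \varepsilon > p_j$; in particular $i$'s newly inserted claim lies strictly above the pre-bid minimum.

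For the first assertion I would analyze the cannibalization loop (the \emph{while} loop of \cref{sn:clmphase}), which exits for exactly one of two reasons: either enough supply has been freed ($S_j \geq q_{ij}$), or the cheapest remaining claim has overtaken the bid ($p_j > b_{ij}$). I would first argue that ``$p_j$ does not increase'' forces the supply-exhaustion exit \emph{with a surviving claim at the old minimum price}: since $i$'s inserted claim lies strictly above $p_j$, the minimum can remain $p_j$ only if some claim at $p_j$ is not fully cannibalized, which in turn forces the loop to have stopped because the desired quantity was met. Hence $i$ acquires its full desired quantity $q_{ij}$. I would then rule out $q_{ij} = s_j$, since freeing $s_j$ units empties every competing claim and pushes the minimum up to $b_{ij}$, contradicting the hypothesis. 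Therefore $q_{ij} = \min\set{D_i, s_j} = D_i$ (with the Hungry Cannibal bookkeeping of \cref{sn:hcrule} folding any self-cannibalized quantity back into both $D_i$ and $q_{ij}$), so resolving the claim drives $D_i$ to $0$ and $i$ becomes satisfied.

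For the second assertion I would exploit the cheapest-first order of cannibalization together with the survival of a claim at $p_j$ just established: because the loop never exhausts the $p_j$-tier, no claim priced strictly above $p_j$ is ever touched. It then remains to locate $k$'s satisfying claim strictly above $p_j$, and this is exactly what the hypothesis ``$k$ was satisfied \emph{after} the last increase of $p_j$'' supplies: any claim $k$ placed once the lot price had settled at its current value has, by \cref{th:ga_pr_nondec}, bid price at least $p_j + \varepsilon$, and since $p_j$ has not moved since, that claim still lies strictly above the current minimum and is never the cheapest. Consequently $i$'s non-increasing bid leaves $k$'s claim intact, so $k$ stays satisfied. I expect this bookkeeping --- distinguishing the bidder whose claim actually \emph{defines} $p_j$ from those that merely joined afterward at a strictly higher price --- to be the main obstacle, since it is where the precise meaning of ``after the last increase'' must be pinned down.

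For the third assertion I would use \cref{th:ga_pr_nondec} in its monotonicity form. When $i$ first won $j$, lot $j$ attained $i$'s best expense, $x_i = c_{ij} - p_j = \max_{\ell \in A(i)}\set{c_{i\ell} - p_\ell}$. If $i$ later becomes unsatisfied while $p_j$ is unchanged, then every other price $p_\ell$ has only risen, so every competing expense $c_{i\ell} - p_\ell$ has only fallen, while $c_{ij} - p_j$ is unchanged; hence $j$ still realizes $i$'s best expense and, per \cref{eq:ga_ji}, $i$ bids again on a lot of price $p_j$. I would close by noting that a non-increasing $p_j$ in fact prevents $i$'s own claim on $j$ from being cannibalized at all --- reaching it would first require emptying the $p_j$-tier and thereby raising $p_j$ --- so any such renewed unsatisfaction must originate from $i$ losing quantity on a different lot, which does not disturb the expense comparison above.
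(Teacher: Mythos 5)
Your proposal is correct, and the load-bearing facts are the same ones the paper's proof uses: by \cref{th:ga_pr_nondec} a bid satisfies $b_{ij} \geq p_j + \varepsilon$; if the lot price is to remain $p_j$, the tier of claims priced exactly $p_j$ cannot be exhausted during the claims phase (otherwise the new minimum in \cref{eq:ga_pj} would be at least $\min\{b_{ij},\,\hat{p}_j\} > p_j$), which forces the bidder to obtain its full desired quantity; that quantity cannot equal $s_j$ (that would require emptying the entire claim list, again raising the price), hence it equals $D_i$ and the bidder is satisfied; and price monotonicity settles the re-bidding assertion exactly as in the paper. The difference is organizational rather than mathematical: the paper proves the first two assertions simultaneously by an induction over successive bidders $i_1,\,i_2,\,\ldots$ (up to $M-1$ of them) who become satisfied at the fixed price, carrying as an invariant that every such bidder's claim sits strictly above $p_j$; you instead decouple the assertions, obtaining protection of previously satisfied bidders directly from the cheapest-first order of the cannibalization loop in \cref{sn:clmphase} combined with the observation that any claim placed after the last increase of $p_j$ has price at least $p_j + \varepsilon$ and so is never the minimum-priced claim. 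Your packaging is arguably cleaner --- it isolates where the hypothesis ``$p_j$ does not increase'' enters, and your closing remark that $i$'s own claim on $j$ cannot be cannibalized while $p_j$ is fixed (so renewed unsatisfaction must come from a different lot) is a small strengthening the paper leaves implicit --- but it buys no extra generality: both arguments rest on exactly the same two contradictions and on \cref{th:ga_pr_nondec}, with the Hungry Cannibal rule of \cref{sn:hcrule} invoked in the same way to discount the bidder's own low-priced claims.
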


\begin{proof}
Fix lot $j^*$.
Let $p_{j^*}$ be the price of $j^*$ prior to the bid by $i$, and assume the 
second-highest bid price on the claim list $C_{j^*}$ is
\begin{equation*}
\hat{p}_{j^*} > p_{j^*}.
\end{equation*}
If $\abs{C_j} < 2$, it is sufficient to assume $\hat{p}_{j^*} = + \infty$.

Suppose $p_{j^*}'$ is the lot price of $j^*$ at some later time, and that
$p_{j^*}' = p_{j^*}$.

Assume first that there are no satisfied sinks on the claim list of $j^*$, and 
let $i_1$ be the sink currently bidding $(b_{i_1j^*},\,q_{i_1j^*})$ on $j^*$.
Let $q_1$ be equal to
\begin{equation*}
q_1 = \sum_{ \setc{(k;\,b_{kj^*},\,q_{kj^*})}{b_{kj^*} = p_{j^*}}  } q_{kj^*}.
\end{equation*}
Because of the HC rule, we can assume without loss of generality that 
$k \neq i_1$ for all claims of price $p_{j^*}$.

As shown in \cref{th:ga_pr_nondec}, we have $b_{i_1j^*} \geq 
p_{j^*}+\varepsilon$.
Thus, if $q_1 \leq q_{i_1j^*}$, all claims of price $p_{j^*}$ have been overbid 
and the price after $i_1$'s claim satisfies
\begin{equation*}
p_{j^*}' \geq \min \set{b_{i_1j^*},\,\hat{p}_{j^*}} > p_{j^*}.
\end{equation*}
This contradicts our supposition that $p_{j^*}' = p_{j^*}$, and so we 
must have $q_1 > q_{i_1j^*}$.
By definition, $q_{i_1j^*} = \min\set{D_{i_1},\,s_{j^*}}$.
If $q_{i_1j^*} = s_{j^*}$, then we have $q_1 > s_{j^*}$, which contradicts the 
definition of $q_1$ as a quantity claimed on $C_{j^*}$.
Hence, we know $q_{i_1j^*} = D_{i_1}$.
Because $i_1$ claimed the quantity $D_{i_1}$ and the HC rule is in place, it 
must be the case that sink $i_1$ is satisfied and
\begin{equation*}
\hat{p}_{j^*}' = \min\set{b_{i_1j^*},\,\hat{p}_{j^*}} > p_{j^*}.
\end{equation*}

Next, suppose instead that $j^*$ has one satisfied sink, and that the sink was 
satisfied by bidding on $j^*$ without increasing the price $p_{j^*}$.
Without loss of generality, assume the sink is $i_1$.
Suppose some sink $i_2$ bids $(b_{i_2j^*},\,q_{i_2j^*})$ on lot $j^*$.
Because $i_1$ is satisfied, we know $i_2 \neq i_1$.
The remaining quantity available at price $p_{j^*}$ equals
\begin{equation*}
q_2 = \sum_{ \setc{(k;\,b_{kj^*},\,q_{kj^*})}{b_{kj^*} = p_{j^*}}  } q_{kj^*}.
\end{equation*}
Because of the HC rule, we can assume $k \neq i_2$ for all claims of price 
$p_{j^*}$.
(From the steps above, we also know $k \neq i_1$ for all such claims.)

Once again, we have $b_{i_2j^*} \geq p_{j^*}+\varepsilon$. 
Note that the bid price associated with the quantity claimed by $i_1$ exceeds 
$p_{j^*}$, so $i_2$ cannot claim any quantity from $i_1$ unless $q_2 \leq 
q_{i_2j^*}$.
However, in that case all claims of price $p_{j^*}$ have been overbid and the 
price after $i_2$'s claim satisfies
\begin{equation*}
p_{j^*}' \geq \min\set{b_{i_2j^*},\,\hat{p}_{j^*}} > p_{j^*}.
\end{equation*}
This contradicts our supposition that $p_{j^*}' = p_{j^*}$, and so we 
must have $q_2 > q_{i_2j^*}$, and $i_1$ remains satisfied.

By definition, $q_{i_2j^*} = \min\set{D_{i_2},\,s_{j^*}}$.
If $q_{i_2j^*} = s_{j^*}$, once again we have a contradiction because the total 
quantity in $C_{j^*}$ exceeds $s_{j^*}$.
Hence, we know $q_{i_2j^*} = D_{i_2}$.
Because $i_2$ claimed the quantity $D_{i_2}$ and the HC rule is in place, it 
must be the case that sink $i_2$ is also satisfied and
\begin{equation*}
\hat{p}_{j^*}' > p_{j^*}.
\end{equation*}

Continuing inductively, we find that up to $M-1$ distinct bidders can be 
satisfied while maintaining $p_{j^*}' = p_{j^*}$, and that
\begin{equation*}
\hat{p}_{j^*}' > p_{j^*}
\end{equation*}
for each of them.
Because the quantity with price $p_{j^*}$ must be owned by at least one 
bidder, it is not possible for more than $M-1$ distinct bidders to satisfy our 
initial assumption that $p_{j^*}' = p_{j^*}$.

Suppose now that $i_k$ becomes unsatisfied for some $k = 1,\,\ldots,\,M-1$, and 
$p_{j^*}' = p_{j^*}$.
Because expenses are nonincreasing and $p_{j^*}' = p_{j^*}$, $x_{i_kj^*}$ must 
still equal the best expense for bidder $i_k$.
Therefore, during its next bidding phase, $i_k$ will bid on a lot with expense 
equal to $x_{i_kj^*}$.
\end{proof}

\subsubsection{General auction terminates}
\begin{theorem}
\label{th:ga_term}
Given $\varepsilon > 0$, if at least one feasible transport plan exists, then 
the general auction method terminates after finitely many iterations.
\end{theorem}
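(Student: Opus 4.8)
The plan is to argue by contradiction: suppose the general auction runs for infinitely many iterations. Every iteration resolves at least one bid by an unsatisfied sink, and the sets $I$, $J$ are finite, so some fixed lot $j_0$ must receive infinitely many bids. By \cref{th:ga_steadysat}, while $p_{j_0}$ stays fixed at most $M-1$ bids can be resolved on $j_0$ (each satisfying a distinct bidder, who then remains satisfied for the rest of that steady phase) before the price must strictly increase; hence infinitely many bids on $j_0$ force $p_{j_0}$ to increase \emph{infinitely often}. Since \cref{th:ga_pr_nondec} guarantees that prices are nondecreasing, the entire problem reduces to showing that no lot price can strictly increase infinitely often.

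First I would establish that the lot prices remain bounded above, using feasibility. Let $J_\infty$ denote the lots whose price is unbounded over the run, and suppose $J_\infty\neq\emptyset$. A lot in $J_\infty$ eventually yields expense $c_{ij}-p_j\to-\infty$, so any sink adjacent to a bounded-price lot abandons $J_\infty$ after finitely many iterations; thereafter the lots of $J_\infty$ receive bids only from the \emph{confined} sinks $i$ with $A(i)\subseteq J_\infty$. A feasible transport plan routes all demand of these sinks inside $J_\infty$, forcing $\sum_{i:\,A(i)\subseteq J_\infty} d_i \le \sum_{j\in J_\infty} s_j$; since the supply on $J_\infty$ therefore suffices to satisfy its confined sinks, they cannot drive those prices to infinity (a steady-price accounting as in \cref{th:ga_steadysat} makes this precise). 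Hence $J_\infty=\emptyset$, and each price converges to a finite limit $p_j^\infty$.

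The crux, and what I expect to be the main obstacle, is to rule out infinitely many strict increases even though, for real-valued data, a single increase may be smaller than $\varepsilon$: when the lowest claims on $j$ are displaced, the price can jump up only to a slightly higher \emph{pre-existing} claim rather than to the new bid. The key idea I would exploit is an overshoot forced by the increment $\varepsilon$. Once $p_j$ has risen above $p_j^\infty-\varepsilon$, which happens after finitely many iterations because $p_j\uparrow p_j^\infty$, every subsequent bid on $j$ carries price at least $p_j+\varepsilon > p_j^\infty \ge p_j$. Such a bid lands strictly above the eventual price, so it can never itself become the minimum of $C_j$; in particular no new claim priced at or below $p_j^\infty$ is ever created after this moment. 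The stock of these ``low'' claims is therefore fixed and finite, and each strict increase of $p_j$ must fully displace at least one of them (the new minimum, being $\le p_j^\infty$, is again a low claim). Consequently $p_j$ can increase only finitely many more times, contradicting the infinitely many increases derived above.

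Assembling the three pieces closes the argument: \cref{th:ga_steadysat} eliminates the frozen-price pathology, the boundedness lemma eliminates runaway prices, and the overshoot observation eliminates the vanishing-increment pathology; the vanishing-quantity pathology cannot then arise, since the total number of bids has already been shown finite. After all finitely many price increases and their accompanying $\le M-1$ steady-phase bids are spent, no further bid is possible, which can only mean that every sink is satisfied, i.e.\ the auction has terminated.
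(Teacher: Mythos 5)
Your overall architecture closely mirrors the paper's: you classify lot-price behavior into frozen, convergent-but-still-increasing, and unbounded, and you attack each regime with the same three tools the paper uses (\cref{th:ga_steadysat}, an $\varepsilon$-overshoot of the price limit, and feasibility). Your step 4 overshoot argument is sound and is essentially the paper's treatment of the lots in its set $J^A$. But there is a genuine gap at the foundation, in your step 2. You claim that while $p_{j_0}$ stays fixed, ``at most $M-1$ bids can be resolved on $j_0$'' before the price must rise, citing \cref{th:ga_steadysat}. That theorem bounds the number of \emph{distinct bidders} that can be satisfied during a steady-price phase, not the number of \emph{bids}: its final assertion explicitly contemplates a satisfied bidder becoming unsatisfied again while $p_j$ is unchanged and then rebidding at the same price. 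During a steady phase of $j_0$ this happens whenever a bidder satisfied by $j_0$ loses a claim it holds on a \emph{different} lot, so a fixed pool of at most $M-1$ bidders can in principle generate infinitely many bids on $j_0$ at constant price, each bid triggered by a displacement elsewhere. Nothing in your argument excludes this mutual-recycling scenario, and it is precisely this scenario (the paper's $I^M$/$J^M$ case) that consumes the largest part of the paper's proof. Your later remark that ``the vanishing-quantity pathology cannot then arise, since the total number of bids has already been shown finite'' therefore rests on the unproved claim, and your step 3 has the same weakness: ``a steady-price accounting as in \cref{th:ga_steadysat} makes this precise'' defers exactly the argument that is missing.

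What is needed, and what the paper supplies, is a global conservation-and-exhaustion argument carried out per \emph{round} rather than per bid. Once the relevant prices are frozen and the recycling is confined to a fixed set of bidders and lots, the total unsatisfied demand $D^k$ of those bidders is the same at the start of every round (every quantity newly claimed is taken from displaced claims of the same group, so satisfaction and new dissatisfaction exactly cancel), while the total quantity $Q^k$ held in claims at the frozen prices decreases by exactly $D^k$ each round, because every displaced unit is replaced by a claim at a price higher by at least $\varepsilon$ (\cref{th:ga_pr_nondec}). Hence after at most $\ceil{Q^k/D^k}$ rounds that pool is exhausted and some frozen price must rise, a contradiction. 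This round-based accounting is the mechanism that simultaneously kills the recycling scenario and the possibility that individual claimed quantities shrink to zero; the per-bid counting in your proposal cannot do either, so your three pieces do not assemble into a complete proof without it.
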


\begin{proof}
Let $I$ be the set of bidders (sinks) and $J$ be the set of lots (sources) for 
a feasible transport 
problem $\mathcal{T}$.
As a consequence of \cref{th:ga_pr_nondec}, we can partition $J$ into four 
subsets:
\begin{itemize}[leftmargin=2em]
\item[--]
$J^F$, the set of lots which receive finitely many bids,
\item[--]
$J^M$, the set of lots whose prices achieve maximum values, while receiving 
infinitely many bids,
\item[--]
$J^A$, the set of lots whose prices asymptotically approach finite limits, 
without ever achieving those limits, and
\item[--]
$J^\infty$, the set of lots whose prices increase without bound.
\end{itemize}

Now consider four subsets of $I$:
\begin{itemize}[leftmargin=2em]
\item[--]
$\displaystyle{I^F = \set{i\text{ bids finitely many times}}}$,
\item[--]
$\displaystyle{I^M = \set{i\text{ bids on some }j \in J^M 
\text{ infinitely many times}}}$,
\item[--]
$\displaystyle{I^A = \set{i\text{ bids on some }j \in J^A
\text{ infinitely many times}}}$, and
\item[--]
$\displaystyle{I^\infty = \set{i\text{ bids on some }j \in J^\infty
\text{ infinitely many times}}}$.
\end{itemize}
We will show that these four sets partition $I$.

Suppose $i \in I$ such that $A(i) \setminus J^\infty$ is nonempty.
Then there exists $j^* \in A(i) \setminus J^\infty$, and by definition 
$p_{j^*}$ is bounded above.
Thus, after a finite number of iterations, $\forall\, j^\infty \in J^\infty$
\begin{equation*}
x_i = \max_{j \in A(i)} \set{c_{ij} - p_j } \geq c_{ij^*} - p_{j^*}
> c_{ij^\infty} - p_{j^\infty}.
\end{equation*}
Therefore, if $A(i) \setminus J^\infty$ is nonempty, we must have $i \notin 
I^\infty$.
By the contrapositive,
\begin{equation}\label{eq:ga_aisubjinf}
A(i) \subseteq J^\infty,
\quad\quad
\forall\, i \in I^\infty.
\end{equation}
This implies that $I^\infty$ is pairwise disjoint with $I^F$, $I^A$, 
and $I^M$.

Suppose there exists $i \in I \setminus I^F$ such that $J^A \cap A(i) \neq 
\varnothing$ and $J^M \cap A(i) \neq \varnothing$.
By definition, $i$ bids infinitely many times on lots from $J^A$ and/or $J^M$.
Let $p_j^n$ be the lot price of $j$ at the end of the $n$-th iteration.
For all $j \in J^A$, define
\begin{equation*}
l_j := \lim_{n \to \infty} p_j^n.
\end{equation*}
After a finite number of iterations $k$, all lots in $J^F$ no longer 
receive bids and all lots in $J^M$ have reached their fixed prices.
Thus, for all $j \in J^M \cup J^F$, and all iterations $n \geq k$,
\begin{equation*}
 p_j^n = p_j^k.
\end{equation*}
Define
\begin{equation*}
j^M := \argmax_{j \in A(i) \cap J^M}\set{c_{ij} - p_j^k}
\end{equation*}
and
\begin{equation*}
j^A := \argmax_{j \in A(i) \cap J^A}\set{c_{ij} - l_j}.
\end{equation*}
One of two possibilities must exist:
\begin{enumerate}
\item
If $c_{ij^M} - p_{j^M}^k \leq c_{ij^A} - l_{j^A}$, then for any $n \geq k$ we 
have $c_{ij^M} - p_{j^M}^{\hat{k}} < c_{ij^A} - p_{j^A}^{n}$.
Thus, after the $k$-th iteration, $i$ will not bid on lots in $J^M$, and so $i 
\notin I^M$.
\item
If $c_{ij^M} - p_{j^M}^k > c_{ij^A} - l_{j^A}$, then there exists iteration 
$\hat{k} \geq k$ such that $c_{ij^M} - p_{j^M}^k > c_{ij^A} - 
p_{j^A}^{\hat{k}}$.
Because lot prices are nondecreasing, after the $\hat{k}$-th iteration $i$ 
will not bid on lots in $J^A$. Therefore, $i \notin I^A$.
\end{enumerate}
Therefore, $I^M$ and $I^A$ are pairwise disjoint.
By definition, $I^F$ must be pairwise disjoint with both $I^M$ and 
$I^A$, because a sink cannot bid both finitely many and infinitely many times.
Thus, all four sets are pairwise disjoint, and since their union is $I$ 
they constitute a partition.

We will now consider possible elements in the three sets 
$I^M$, $I^A$, and $I^\infty$, to show that these sets are in fact empty.
\begin{enumerate}
\item\label{it:ga_IMempty}
Suppose $I^M$ is nonempty.
After a finite number of iterations all prices in $J^M$ are fixed and all 
bidders in $I^M$ no longer bid on lots outside of $J^M$.
From that iteration on, by \cref{th:ga_steadysat}, each bidder $i \in 
I^M$ must be satisfied after its bid.
Thus, some other bidder(s) must cause each $i \in I^M$ to become unsatisfied 
infinitely many times.
By \cref{th:ga_steadysat} and the definition of $I^M$, each time $i$ becomes 
unsatisfied it will rebid on a lot in $J^M$ with the exact same price as the 
one 
it chose on its previous bid.
Therefore, after a finite number of additional iterations, all $i \in I^M$ will 
only have claims in $J^F$ and $J^M$, and sinks in $I^M$ only become unsatisfied 
as a result of bids from other members of $I^M$ for lots in $J^M$.
As a result, after a finite number of additional iterations the quantity 
claimed from $j \in J^M$ at lot price $p_j$ will only be claimed by bidders in 
$I^M$.

Assume without loss of generality that all of these things have occurred by the start 
of the $k$-th iteration.
Let $D^k$ be the total unsatisfied demand at the start of the $k$-th iteration,
\begin{equation*}
D^k = \sum_{i \in I^M} D_i,
\end{equation*}
and let $Q^k$ be the total supply available at the fixed prices:
\begin{equation*}
Q^k = \sum_{j \in J^M}
\sum_{\substack{(i;\,b_{ij},\,q_{ij}) \in C_j\\b_{ij} = p_j^k}} q_{ij}.
\end{equation*}
Assume without loss of generality that $Q^k = u^kD^k$ for some $u^k$.
Because the prices for lots in $J^M$ remain unchanged, we know $u^k > 1$.
During the round, each unsatisfied bidder in $I^M$ must bid on some lot in 
$J^M$. Because the prices of those lots do not increase, we know that all 
those bidders must be satisfied.
Thus, the total quantity acquired by bidders in $I^M$ during the $k$-th round 
must equal $D^k$.
Since the prices of the lots do not increase, the claimed quantities must have 
been taken from $Q^k$, and so $Q^k$ must have been reduced by $D^k$.
Because the new amounts claimed could come only from previous claims by bidders 
in $I^M$, at the start of the $(k+1)$-st round the unsatisfied demand, 
$D^{k+1}$, must equal 
$D^k$, and the ratio of the available quantity, given by $u^{k+1}$, must 
satisfy $u^{k+1} \leq u^k - 1$.
Therefore, $\ceil{u^k}$ rounds after the $k$-th, the available quantity at the
current prices must have been exhausted, and the price of some lot in $J^M$
must have increased.
This contradicts the definition of $J^M$, and therefore $I^M$ must be empty.
\item\label{it:ga_IAempty}
Suppose $I^A$ is nonempty.
By the definition of $J^A$, for each $j_r \in J^A$, there exists an associated
iteration $k_r$ such that the price at the start of that iteration, 
$p_{j_r}^{k_r}$, satisfies
\begin{equation*}
l_r - p_{j_r}^{k_r} < \varepsilon.
\end{equation*}
Let $k = \max_{r} k_r$.

Assume without loss of generality that by the start 
of iteration $k$ all lots in $J^F$ have also received their final bids.
This implies that all bidders in $I^A$ are now bidding exclusively on lots in 
$J^A$.
Recall from \cref{th:ga_pr_nondec} that each new bid exceeds the current lot 
price 
by at least $\varepsilon$.
Thus, after the $k$-th iteration, for all $i \in I^A$ and all $j \in J^A$, the 
new bid price $b_{ij}$ must exceed $l_j$.

Let $D^k$ be the total unsatisfied demand at the start of the $k$-th iteration,
\begin{equation*}
D^k = \sum_{i \in I^A} D_i,
\end{equation*}
and let $Q^k$ be the total supply available at prices not exceeding the 
asymptotic limits:
\begin{equation*}
Q^k = \sum_{j \in J^A}
\sum_{\substack{(i;\,b_{ij},\,q_{ij}) \in C_j\\b_{ij} \leq l_j}} q_{ij}.
\end{equation*}
Assume without loss of generality that $Q^k = u^kD^k$ for some $u^k > 0$.

Let $i$ be some unsatisfied bidder in $I^A$, bidding on $j \in J^A$.
The bid price offered by $i$, by exceeding $l_j$, exceeds the price of all 
quantities claimed in $Q^k$.
We know from the definition of $J^A$ that the price of lot $j$ does not exceed 
$l_j$.
Thus, all bidders in $I^A$ must be satisfied at the end of their bids.
This implies that the claimed quantities must have been taken from $Q^k$, and 
that
\begin{equation*}
Q^{k+1} = (u^k - 1) D^k.
\end{equation*}
Because no bidders outside $I^A$ will bid again on lots in $J^A$, the 
bidders who become unsatisfied due to the new claims on $Q^k$ must be members 
of $I^A$.
Thus, at the start of the $(k+1)$-st round the unsatisfied demand, $D^{k+1}$, 
must equal 
$D^k$ and the ratio of the available quantity $u^{k+1}$, must satisfy $u^{k+1} 
\leq u^k - 1$.
Therefore, $\ceil{u^k}$ rounds after the $k$-th, the quantity claimed at prices 
below 
asymptotic bounds must have been exhausted, and the price of some lot $j 
\in J^A$ exceeds $l_j$.
This contradicts the definition of $J^A$, and therefore $I^A$ must be empty.
\item\label{it:ga_IINFempty}
Suppose $I^\infty$ is nonempty.
From \cref{it:ga_IMempty,it:ga_IAempty}, we know $I = I^F \cup I^\infty$.
Let $i \in I^\infty$.
\Cref{eq:ga_aisubjinf} implies
\begin{equation*}
\min_{j \in A(i)} p_{j} \geq \min_{j \in J^\infty} p_j,
\end{equation*}
and since $p_j \to +\infty$ for all $j \in J^\infty$, it must be the case that
\begin{equation*}
\min_{j \in A(i)} p_{j} \to +\infty.
\end{equation*}
Therefore,
\begin{equation*}
x_i = \max_{j \in A(i)} \set{c_{ij} - p_j} \to -\infty.
\end{equation*}
Thus, after a finite number of iterations, each lot in $J^\infty$ 
will be satisfied exclusively by the bidders in $I^\infty$.
Otherwise, some bidder in $I^F$ would become unsatisfied infinitely 
many times, causing them to bid infinitely often.
This would contradict the definition of $I^F$.

Furthermore, because $I^\infty$ is nonempty, we know the algorithm must not 
terminate. Thus, after a finite number of iterations there exists at least one 
bidder in $I^\infty$ that is not satisfied, while all bidders in $I^F$ have 
been satisfied by the lots in $J^F$.
It follows that the total demand of the bidders in $I^\infty$ must be
strictly larger than the total supply of the lots in $J^\infty$.
However, by \cref{eq:ga_aisubjinf}, the demand in $I^\infty$ can only be 
satisfied by supply in $J^\infty$.
This contradicts the assumption that a feasible transport plan exists.
Therefore, $I^\infty$ must be empty.
\end{enumerate}
Because $I^M$, $I^A$, and $I^\infty$ are all empty, we know $I = I^F$.
Therefore, the general auction algorithm must terminate after finitely many 
bids.
\end{proof}

\subsection{Optimality of the general auction}
When a real-valued transport problem is solved by computer, some error is 
inevitable, if only from the limits of machine precision.
We show below that the error associated with the general auction method can be 
bounded \emph{a priori} with respect to $\varepsilon$.
As a consequence, the method is convergent with respect to 
$\varepsilon$-scaling.

\subsubsection{Minimum price increase for general auction}
\begin{corollary}
\label{th:ga_pr_inc}
Given a feasible transport problem $\mathcal{T}$, there exists a finite $k \in 
\N$ and 
$\delta > 0$ such that every $k$ bids the price of a lot is guaranteed 
to change, and each time a lot price changes it increases by at least $\delta$.
\end{corollary}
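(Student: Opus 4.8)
The plan is to derive both constants directly from termination (\cref{th:ga_term}), treating the corollary as a bookkeeping consequence of the fact that the run is finite. First I would observe that, since a feasible transport plan exists, \cref{th:ga_term} guarantees the auction halts after finitely many iterations; as each iteration issues at most one bid per sink in the finite set $\tilde{I}$, the entire run consists of a finite number $B$ of bids. Consequently only finitely many lot-price changes can occur over the whole run, and this finiteness is the engine behind both claimed bounds.

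For the increment $\delta$, I would invoke \cref{th:ga_pr_nondec}: lot prices are nondecreasing, so every genuine change of a lot price is a strict increase. Listing the finitely many price changes that occur during the run gives a finite set of strictly positive real increments; I would then set $\delta$ to be their minimum (and take $\delta$ to be any positive number if no price ever changes, in which case the statement is vacuous). Since a minimum of finitely many positive numbers is positive, $\delta > 0$, and by construction every price change is by at least $\delta$.

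For the spacing $k$, I would partition the sequence of $B$ bids into maximal blocks of consecutive bids during which no lot price changes at all. Each block has length at most $B$, so the maximum block length $m$ is finite; setting $k := m+1$ then forbids any window of $k$ consecutive bids from leaving all prices fixed, which is exactly the assertion that some lot price must change within every $k$ bids. Conceptually, the reason such stalls cannot persist is supplied by \cref{th:ga_steadysat}: a bid that fails to raise its lot's price must satisfy its bidder, and only finitely many bidders can be satisfied at a fixed price before the quantity available at that price is exhausted and the price is forced upward.

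The genuinely substantive point---and the step I expect to be the main obstacle---is the strict positivity of $\delta$. A priori, the bid prices $c_{ij}-w_i+\varepsilon$ are built recursively from the real-valued costs and the evolving prices, so one cannot assume they lie in a discrete set; without further input, nothing rules out a sequence of price increases shrinking to zero (precisely the asymptotic scenario $J^A$ of \cref{th:ga_term}) or arbitrarily long fixed-price stalls (the scenario $J^M$). What makes the corollary work is that \cref{th:ga_term} has already excluded both pathologies, collapsing the potentially infinite, accumulating family of increments into a finite one; once finiteness is in hand, extracting $\delta$ and $k$ is immediate. I would therefore emphasize in the write-up that the corollary inherits all its force from the termination theorem, which is why it is stated as a corollary rather than proved independently.
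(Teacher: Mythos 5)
Your proposal is correct and takes essentially the same route as the paper: the paper's entire proof is the one-line remark that the result ``follows from \cref{th:ga_term}, specifically that $J^M \cup J^A = \varnothing$,'' and your write-up is just the fleshed-out version of that observation---termination makes the run finite, so $\delta$ is the minimum of finitely many strictly positive increments and $k$ exceeds the longest fixed-price stall. The only difference is the level of detail: the paper leaves the finite min/max extraction implicit, while you make it (and the vacuous no-change case) explicit.
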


\begin{proof}
This follows from \cref{th:ga_term}, specifically that $J^M \cup J^A = 
\varnothing$.
\end{proof}

\subsubsection{General auction preserves \texorpdfstring{$\varepsilon$}{ε}-CS}
\begin{theorem}
\label{th:ga_eCS}
If a transport plan and lot price vector satisfy $\varepsilon$-CS for the 
general auction method at the start of an iteration, the same is true of the 
transport plan and lot price vector obtained at the end of that iteration.
\end{theorem}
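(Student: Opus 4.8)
The plan is to prove that $\varepsilon$-CS holds for \emph{every} claim present in the claim lists at the end of the iteration, since by construction these surviving claims are exactly the arcs carrying positive flow in the resulting transport plan. Concretely, for each claim $(k;\,b_{kj},\,q_{kj}) \in C_j$ that is present at the end of the iteration, I would verify the single inequality $c_{kj} - p_j \geq x_k - \varepsilon$, where $p_j$ and $x_k = \max_{l \in A(k)}\set{c_{kl} - p_l}$ are evaluated using the end-of-iteration prices. The two ingredients driving the argument are the bid-price formula \cref{eq:ga_bij} and the fact, from \cref{th:ga_pr_nondec}, that lot prices are nondecreasing; the start-of-iteration hypothesis enters only through the inductively maintained fact that each surviving claim was placed, at some earlier moment, on what was then its bidder's best lot.

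The core step is a uniform estimate for an arbitrary surviving claim. Let $w_k$ denote the second-best expense that $k$ computed via \cref{eq:ga_wi} at the moment it placed the bid underlying this claim, so that at that moment $j$ was $k$'s best lot. First, \cref{eq:ga_bij} gives $c_{kj} - b_{kj} = w_k - \varepsilon$. Second, because the claim still belongs to $C_j$, the lot-price definition \cref{eq:ga_pj} forces $p_j \leq b_{kj}$, whence $c_{kj} - p_j \geq w_k - \varepsilon$. Third, since $j$ was $k$'s best lot at bid time, every other lot $l \in A(k)$ satisfied $c_{kl} - p_l \leq w_k$ then, and \cref{th:ga_pr_nondec} propagates this inequality forward to the end-of-iteration prices. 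Consequently $x_k \leq \max\set{w_k,\, c_{kj} - p_j}$, and a short two-case comparison, according to whether $c_{kj} - p_j$ is at least $w_k$ or strictly below it, yields $c_{kj} - p_j \geq x_k - \varepsilon$ in both cases.

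Applying this estimate to every surviving claim establishes the theorem. I would organize the bookkeeping by splitting the end-of-iteration claims into those newly inserted during this iteration's claims phase and those carried over from before; the uniform estimate covers both, so no separate treatment is needed, while claims whose quantity is driven to zero are removed from $C_j$ and carry no flow. A few degenerate cases warrant a remark: when $A(k)$ is a singleton, $w_k = -\infty$ and $\varepsilon$-CS is automatic because $x_k = c_{kj} - p_j$; and the Hungry Cannibal rule (\cref{al:HCrule}) only merges quantities and never alters a bid price, so it leaves the estimate intact.

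The main obstacle is conceptual rather than computational, and it is precisely the feature distinguishing this setting from the classical auction: here the lot price $p_j$ is a \emph{minimum} over a claim list in which several distinct bid prices coexist, so $p_j$ can rise even for a claim that is otherwise untouched during the iteration. The start-of-iteration hypothesis together with nondecreasing prices is therefore not by itself enough to control $c_{kj} - p_j$ for such a carried-over claim, since a rising $p_j$ shrinks this expense. The resolution, and the crux of the proof, is the observation that any surviving claim must satisfy $p_j \leq b_{kj}$; this is what lets the bid-price formula \cref{eq:ga_bij} bound the expense gap uniformly, independently of how far $p_j$ has climbed within the iteration.
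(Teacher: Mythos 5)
Your proof is correct, and it takes a genuinely different---and in one respect more complete---route than the paper's. The paper argues by cases on the lot: (i) if sink $i$ bids on $j^*$ during the iteration and the price of $j^*$ changes as a result, it uses the bid formula \cref{eq:ga_bij} together with $p_{j^*}' \leq b_{ij^*}$ from \cref{eq:ga_pj}, which is essentially your estimate specialized to brand-new claims; (ii) if $i$ holds a claim on a lot whose price did not change during the iteration, it invokes the start-of-iteration $\varepsilon$-CS inequality directly and pushes it forward using \cref{th:ga_pr_nondec}. Your proof instead runs a single uniform estimate over every surviving claim, anchored at the moment that claim's bid was placed, and never uses the start-of-iteration $\varepsilon$-CS inequality at all. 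The payoff is that you cover the case the paper's dichotomy omits: a carried-over claim, untouched during the iteration, on a lot whose price nonetheless rose because cheaper claims beneath it were cannibalized---exactly the situation you single out as the crux. For such a claim the paper's case (ii) computation is unavailable (the left side $c_{kj^*}-p_{j^*}'$ drops while the right side need not), and only your survival inequality $p_{j} \leq b_{kj}$ combined with the bid-time formula closes it; so your argument repairs a genuine incompleteness in the paper's case analysis. The trade-off is that your proof does not derive the conclusion from the theorem's literal hypothesis: plan-level $\varepsilon$-CS constrains only the lot prices (minima over claim lists), not the individual bid prices sitting above those minima, so your per-claim estimate cannot be extracted from the stated hypothesis alone; what you are really propagating is the stronger invariant that every claim in every claim list was placed, via \cref{eq:ga_bij}, on what was then its bidder's best lot. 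That strengthening is exactly what the algorithm maintains (claim lists start empty, every claim originates in a bid, and neither cannibalization nor the HC rule ever edits a bid price), so the theorem as it is actually used---iteratively along an execution of the general auction---is fully justified by your argument; but you should state that strengthened invariant explicitly as the object carried across iterations, rather than presenting it as a consequence of the stated $\varepsilon$-CS hypothesis.
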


\begin{proof}
Suppose $\varepsilon$-CS holds at the start of the iteration.
Fix lot $j^*$ and let its price before and price after iteration be given by 
$p_{j^*}$ and $p_{j^*}'$, respectively.

Suppose that sink $i$ bids on lot $j^*$ during the iteration, and the lot price 
of $j^*$ changes as a result.
By \cref{eq:ga_ji,eq:ga_bij}
\begin{equation*}
b_{ij^*} = c_{ij^*} - w_i + \varepsilon,
\end{equation*}
which implies, by applying \cref{eq:ga_wi}, that
\begin{equation*}
c_{ij^*}-b_{ij^*} = w_i - \varepsilon
= \max\limits_{j \in A(i),\, j\neq j^*} \set{ c_{ij} - p_j } - \varepsilon.
\end{equation*}
\Cref{eq:ga_pj} guarantees that $p_{j^*}' \leq b_{ij^*}$, so
\begin{equation*}
c_{ij^*}-p_{j^*}' \geq c_{ij^*} - b_{ij^*}
= \max\limits_{j \in A(i),\, j\neq j^*} \set{ c_{ij} - p_j } - \varepsilon.
\end{equation*}
By \cref{th:ga_pr_nondec}, $p_j' \geq p_j$ for all $j$.
Thus,
\begin{equation*}
c_{ij^*}-p_{j^*}'
\geq \max\limits_{j \in A(i), j\neq j^*} \set{ c_{ij} - p_j' } - \varepsilon.
\end{equation*}
Because $c_{ij^*}-p_{j^*}' \geq c_{ij^*}-p_{j^*}' - \varepsilon$,
\begin{equation*}
c_{ij^*}-p_{j^*}'
\geq \max\limits_{j \in A(i)} \set{ c_{ij} - p_j' } - \varepsilon,
\end{equation*}
and $\varepsilon$-CS is satisfied.

Suppose that $i$ has a claim on $j^*$, but the lot price $p_{j^*}$ has not 
changed during the iteration.
Because $\varepsilon$-CS held prior to the iteration and $p_j \leq p_j'$ for 
all $j$,
\begin{align*}
\begin{split}
c_{ij^*} - p_{j^*}' = c_{ij^*} - p_{j^*}
&\geq \max_{j \in A(i)} \set{c_{ij} - p_{j}} - \varepsilon \\
&\geq \max_{j \in A(i)} \set{c_{ij} - p_{j}'} - \varepsilon.
\end{split}
\end{align*}
Thus, $\varepsilon$-CS holds for all claims in every claim list.

Therefore, $\varepsilon$-CS holds for all $(i,\,j;\,q_{ij})$ in the 
transport plan $T$.
\end{proof}

\subsubsection{General auction error bound}
\begin{theorem}
\label{th:ga_Leps}
Let $\varepsilon > 0$ be the step size of the general auction method.
If a feasible transport plan exists, then when the general auction method 
terminates, the resulting feasible transport plan is within $L\varepsilon$ of 
optimal, where
\begin{equation}
L = \sum_{i=1}^M d_i = \sum_{j=1}^N s_j.
\end{equation}
\end{theorem}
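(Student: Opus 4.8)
The plan is to sandwich the primal cost of the terminal transport plan between $P^* - L\varepsilon$ and $P^*$. By \cref{th:ga_term} the algorithm halts after finitely many iterations with a complete transport plan $T$, and by \cref{th:ga_eCS} the pair $(T,\,p)$ satisfies $\varepsilon$-CS at termination. Write $f_{ij}$ for the simplified flows of $T$ and $x_i = \max_{k \in A(i)}\set{c_{ik}-p_k}$ for the sink expenses of \cref{eq:expn}. The upper bound is immediate: $T$ is feasible and $P^*$ is the optimal (maximal) primal cost, so $\sum_{(i,\,j)\in\mathcal{A}} c_{ij}f_{ij} \leq P^*$. All the work goes into the matching lower bound.

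For the lower bound I would first record that every source is fully consumed at termination. Completeness of $T$ gives $\sum_{(i,\,j)\in\mathcal{A}} f_{ij} = \sum_i d_i = L$, while the auction never allocates more than $s_j$ to lot $j$, so $\sum_i f_{ij} \leq s_j$ for each $j$; combined with $L = \sum_j s_j$ from \cref{eq:tp_defL} this forces $\sum_i f_{ij} = s_j$ for every source. Next I would apply $\varepsilon$-CS arc by arc: for each $(i,\,j;\,f_{ij}) \in T$ we have $c_{ij} - p_j \geq x_i - \varepsilon$, i.e.\ $c_{ij} \geq x_i + p_j - \varepsilon$. Multiplying by the nonnegative flow $f_{ij}$, summing over $\mathcal{A}$, and collapsing the two double sums with the demand constraint \cref{eq:transDi} and the supply identity just established yields
\begin{equation*}
\sum_{(i,\,j)\in\mathcal{A}} c_{ij}f_{ij}
\;\geq\; \sum_{i=1}^M d_i x_i + \sum_{j=1}^N s_j p_j - \varepsilon\!\!\sum_{(i,\,j)\in\mathcal{A}}\!\! f_{ij}
\;=\; \sum_{i=1}^M d_i x_i + \sum_{j=1}^N s_j p_j - L\varepsilon,
\end{equation*}
where the final equality uses $\sum_{(i,\,j)\in\mathcal{A}} f_{ij} = L$.

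It then remains to recognize the right-hand sum (before subtracting $L\varepsilon$) as the dual profit of \cref{eq:dualMIN} evaluated at the terminal price vector, and to bound it below by $P^*$. Since prices are nonnegative and nondecreasing by \cref{th:ga_pr_nondec}, the terminal $p$ is an admissible point of the minimization in \cref{eq:dualMIN}, so its dual profit is at least the optimal dual profit $D^*$; strong duality for the transport linear program gives $D^* = P^*$. Chaining the inequalities produces $P^* - L\varepsilon \leq \sum_{(i,\,j)\in\mathcal{A}} c_{ij}f_{ij} \leq P^*$, which is the claim. I expect the main obstacle to be precisely this duality step together with the supply-exhaustion bookkeeping on which it rests: the collapse of $\sum_{(i,\,j)} p_j f_{ij}$ into $\sum_j s_j p_j$ requires that all supply be consumed, and the bound $D^* = P^*$ must be invoked in the correct direction (the terminal dual profit overestimates $P^*$, while the plan's primal cost underestimates it). The arc-by-arc $\varepsilon$-CS summation itself is routine once these two facts are in hand.
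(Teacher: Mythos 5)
Your proof is correct and follows essentially the same route as the paper's: apply $\varepsilon$-CS to each arc of the terminal plan, multiply by flows, collapse the double sums via the demand constraints and supply exhaustion to recognize the dual profit at the terminal prices, and conclude via $D^* = P^*$ together with the trivial primal upper bound. If anything, you are slightly more careful than the paper, which uses the identity $\sum_i f_{ij} = s_j$ without the explicit feasibility argument you give for it.
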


\begin{proof}
Assume the transport problem is feasible.
Let $P^*$ be the optimal primal solution to the transport problem,
\begin{equation*}
P^* = \max_{\setc{T \in \mathcal{T}}{T\text{ complete}}}\,\,
\sum_{(i,\,j) \in \mathcal{A}} c_{ij}f_{ij},
\end{equation*}
and $D^*$ be the optimal dual solution
\begin{equation*}
D^* =
\min_{p=\set{p_j}_{j=1}^N}
\set*{ \sum_{i=1}^M d_i\max_{j \in A(i)} \set{c_{ij} - p_j}
+ \sum_{j=1}^N s_jp_j }.
\end{equation*}
Let $T^*$ be the simplified transport plan when the auction 
terminates, and
let
$p^* = (p_1^*,\,\ldots,\,p_N^*)$ be the resulting
price vector.
Let $i$ be any sink.
Suppose $(i,\,j_r;\,f_{ij_r}) \in T^*$, with 
$r \in \set{ 1,\,\ldots,\,t_i }$.
Because $(T^*,\,p^*)$ satisfies $\varepsilon$-CS,
\begin{equation*}
\max_{j \in A(i)} \set{c_{ij} - p_j^*} - \varepsilon
\leq c_{ij_r} - p_{j_r}^*.
\end{equation*}
Therefore, by rearranging and summing terms for all $j_r$,
\begin{align*}
\max_{j \in A(i)} \set{c_{ij} - p_j^*}
+ p_{j_r}^*
&\leq
\varepsilon + c_{ij_r}
\\
f_{ij_r}\left(\max_{j \in A(i)} \set{c_{ij} - p_j^*}
+ p_{j_r}^*\right)
&\leq
f_{ij_r}(\varepsilon + c_{ij_r})
\\
\sum
f_{ij_r}\left(\max_{j \in A(i)} \set{c_{ij} - p_j^*} + p_{j_r}^*\right)
&\leq
\sum
(f_{ij_r}\varepsilon + f_{ij_r}c_{ij_r})
\\
d_i\max_{j \in A(i)} \set{c_{ij} - p_j^*} +
\sum
f_{ij_r}p_{j_r}^*
&\leq
d_i\varepsilon +
\sum
f_{ij_r}c_{ij_r},
\end{align*}
where each summation takes place on all
$(i,\,j_r;\,f_{ij_r}) \in T^*$ over all $r \in \set{ 1,\,\ldots,\,t_i }$.

Summing over all sinks $i$, this gives us
\begin{align}\label{eq:ga_sinksum}
\begin{split}
\sum_{i=1}^M d_i\max_{j \in A(i)} \set{c_{ij} - p_j^*} +
\sum_{(i,\,j;\,f_{ij}) \in T^*} f_{ij}p_j^* \\
\quad\quad\leq
\sum_{i=1}^M d_i\varepsilon +
\sum_{(i,\,j;\,f_{ij}) \in T^*} f_{ij}c_{ij}.
\end{split}
\end{align}
Given any sink $j$, we have $(i_u,\,j;\,f_{i_uj}) \in T^*$ for all
$u \in \set{ 1,\,2,\,\ldots,\,v_j}$.
Thus, summing first over the $i_u$ for $j$, and then over all $j$, we have
\begin{align*}
\sum_{\substack{(i_u,\,j;\,f_{i_uj}) \in T^*\\u \in \set{1,\,\ldots,\,v_j}}}
f_{i_uj}p_j^*
&=
s_jp_j^* \\
\sum_{(i,\,j;\,f_{ij}) \in T^*} f_{ij}p_j^*
&=
\sum_{j=1}^N s_jp_j^*.
\end{align*}
Substituting this into \cref{eq:ga_sinksum}, we have
\begin{align*}
\begin{split}
\sum_{i=1}^M d_i\max_{j \in A(i)} \set{c_{ij} - p_j^*}
+ \sum_{j=1}^N s_jp_j^* \\
\quad\quad\leq
\sum_{i=1}^M d_i\varepsilon +
\sum_{(i,\,j;\,f_{ij}) \in T^*} f_{ij}c_{ij}.
\end{split}
\end{align*}
Therefore,
\begin{align*}
P^*
&= D^*
\\ &\leq
\sum_{i=1}^M d_i\max_{j \in A(i)} \set{c_{ij} - p_j^*}
+ \sum_{j=1}^N s_jp_j^*
\\ &\leq
\sum_{i=1}^M d_i\varepsilon +
\sum_{(i,\,j;\,f_{ij}) \in T^*} f_{ij}c_{ij}
\\ &\leq
L\varepsilon + P^*
\\ &\leq
L\varepsilon + D^*.
\end{align*}
\end{proof}

\subsection{Essential characteristics of the general auction}
In \emph{Network Optimization: Continuous and Discrete Models}, Dimitri 
\B{} describes what he considers the ``important ingredients'' of auction 
methods~\cite{Bertsekas1998a}.
Here, in the same form used by \B{}, are what we consider the essential 
elements of the general auction method:

\bigskip
\begin{center}
\fbox{
  \parbox{.90\textwidth}{
Given $\varepsilon > 0$:
\begin{description}
\item[(a)\phantom{1)}] $\varepsilon$-CS is maintained.
\item[(b-1)] During each iteration, at least one bidder with unsatisfied 
demand 
claims supply in one lot.
\item[(b-2)] The bid price of any claimed supply is increased by at least 
$\beta\varepsilon$, where $\beta$ is some fixed positive constant.
\item[(b-3)] Any previously-claimed supply that is needed to satisfy a 
higher priced claim (if any) becomes unclaimed.
\item[(c-1)] No bid price is decreased.
\item[(c-2)] Any supply that was claimed at the start of an iteration 
remains 
claimed at the end of that iteration (although the bidder claiming it may 
change).
\end{description}
  }
}
\end{center}

\bigskip
With the exception of (a), all of these characteristics are essential to our 
argument that the general auction algorithm terminates after a finite number of 
iterations.
Characteristic (a) allows us to relate the the resulting price vector to the 
optimal price vector we seek, in order to establish a worst-case bound on 
the distance from optimality.

\subsection{AUCTION--SO is a special case of the general auction}
Given an integer-valued transport problem, we can relate the general auction 
method to the extended auction method.
This relationship bypasses the AUCTION--SOP algorithms, and establishes 
the AUCTION--SO as a special case of the general auction.
In \cite{Bertsekas1989a}, \BC{} showed that the original assignment auction 
method is a special case of the AUCTION--SO.
Thus, it follows that the assignment auction developed by \B{} in 
\cite{Bertsekas1981a} is a special case of the general auction.

\begin{theorem}
\label{th:ga_soequiv}
Suppose $\mathcal{T}$ is a feasible integer-valued transport problem and $d_i = 1$ 
for all sinks $i$ in $\mathcal{T}$. Then the general auction algorithm is 
equivalent to the auction algorithm for similar objects.
\end{theorem}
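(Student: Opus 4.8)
The plan is to exhibit an explicit correspondence between the states of the two algorithms and then argue by induction on iterations that, started from matching initial data, they generate identical price vectors and matching assignments at every step. First I would set up the dictionary. Since $\mathcal{T}$ is integer-valued with $d_i = 1$ for every sink, each sink $i$ is already a \emph{person}, so no splitting of sinks is required; only each source $j$ is split into $s_j$ \emph{similar objects}, all sharing the cost coefficients and adjacencies of $j$. Under this dictionary, a claim $(i;\,b_{ij},\,q_{ij}) \in C_j$ corresponds to the assignment of one copy of source $j$ to person $i$ at price $b_{ij}$, and the lot price $p_j = \min_{(i;\,b_{ij},\,q_{ij}) \in C_j} b_{ij}$ corresponds to the price of the cheapest copy of $j$ --- precisely the copy AUCTION--SO makes available to an incoming bidder. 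Because $D_i \in \{0,\,1\}$ and $s_{j_i} \geq 1$, every bid quantity satisfies $q_{ij_i} = \min\{D_i,\,s_{j_i}\} = 1$, so each general-auction bid is a bid for exactly one unit, matching a person bidding for a single object. I would also note that the Hungry Cannibal rule is vacuous here: an unsatisfied sink holds zero units, so it can never have a prior claim on the lot it bids on.

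Next I would match the bidding phase. The decisive point is that the second-best expense $w_i = \max_{j \in A(i),\,j \neq j_i}\{c_{ij} - p_j\}$ ranges over \emph{distinct} sources, thereby excluding other copies of the winning source $j_i$. This is exactly the device AUCTION--SO uses to prevent spurious bidding wars between similar objects: the increment is computed from the best value in a \emph{different} similarity class, not from a second copy of the same class. I would therefore show that the general-auction bid price $b_{ij_i} = c_{ij_i} - w_i + \varepsilon$ coincides with the AUCTION--SO bid of person $i$ on a copy of object $j_i$, so both algorithms raise prices by the same amount.

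Then I would match the claims phase. If free supply is available ($S_j \geq 1$) the incoming bidder simply claims an unassigned copy of $j$; otherwise, with the source fully claimed ($S_j < q_{i_jj} = 1$), the general auction displaces the lowest-priced claim. Since all quantities equal $1$, this evicts exactly one copy held by the previous cheapest bidder, installs the new bidder in its place, and updates $p_j$ to the new minimum. Either way the bidder obtains its single unit, exactly as in AUCTION--SO, where the arriving person seizes an unassigned or cheapest-assigned copy of the similarity class and any displaced person becomes unassigned. Combining the two correspondences, one iteration of the general auction maps matching states to matching states; an induction on iterations --- with \cref{th:ga_pr_nondec} supplying the monotone price behavior both algorithms share --- then yields that the full runs coincide and produce the same terminal prices and transport plan, after identifying each unit assignment with a flow of $1$.

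The main obstacle I anticipate is pinning down the exact AUCTION--SO price-update rule of \BC{} in~\cite{Bertsekas1989a} and verifying rigorously that the lot price, defined here as a \emph{minimum} over the claim list, tracks the per-copy object prices of AUCTION--SO at every step. In particular, I must confirm that the general auction's choice of displaced copy (its lowest-priced claim) agrees with AUCTION--SO's choice, and that ties and the $\varepsilon$-increment are resolved identically, so that the two price sequences cannot drift apart over many iterations.
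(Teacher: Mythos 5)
Your proposal is correct and takes essentially the same approach as the paper's proof: the same dictionary (singleton similarity classes for sinks, each source viewed as a class of similar objects whose cheapest copy gives the lot price), the same observations that every bid quantity equals one and that the second-best expense excludes the winning similarity class, and the same phase-by-phase matching of bidding and claims. The tie-resolution obstacle you flag at the end is dispatched in the paper by assuming, without loss of generality, that the order of claims in the claim list matches the sorted order of expenses in AUCTION--SO.
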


\begin{proof}
Let $\mathcal{T}$ be any feasible integer-valued transport problem such that 
for all sinks $i$, $d_i = 1$.
Thus, for purposes of the AUCTION--SO we have the similarity class $S(i) = 
\set{i}$ for all sinks $i$.
The lot represented by $S(i)$ is unsatisfied if and only if there exists 
exactly one person for the AUCTION--SO that is unsatisfied.
Consider the bidding phase for such a person $i$.

If the object $j_i$ offers best expense for the AUCTION--SO, then the lot 
represented by $S(j_i)$ also offers best expense, so the object $j_i$ chosen by 
the AUCTION--SO corresponds to the choice of lots in the general auction.
Let $j'$ be the object such that for the AUCTION--SO
\begin{equation}
w_i = \max_{j \in A(i) \setminus S(j_i)} \set{c_{ij} - p_j} = c_{ij'} - p_{j'}.
\end{equation}
As a consequence, it must be that
\begin{equation}
p_{j'} = \min_{j \in S(j')} p_j,
\end{equation}
and so the second-best expense computed by the general auction method must 
equal that computed by the AUCTION--SO.
Therefore, the bid prices for the SO and general auction must be equal.
Because $\mathcal{T}$ is an integer-valued transport problem, $S(i)$ is 
unsatisfied, and $d_i = 1$, the quantity desired by the general auction must be
\begin{equation}
q_{ij_i} = \min\set{D_i,\,s_{j_i}} = 1.
\end{equation}
Therefore, the two bidding phases are equivalent.

Let $j$ be some object who receives one or more bids during the claims phase, 
and let $i_j$ be the person that made the highest bid on $j$.

Suppose the object $j$ has already been claimed by some person $k$.
Because $j$ is the lowest-priced object in $S(j)$, and each bid increases the 
price of an object by at least $\varepsilon$, this implies that the lot $S(j)$ 
is unavailable.
The AUCTION--SO ``claim'' on object $j$ corresponds to making a claim on the 
lot 
$S(j)$, and
\begin{equation}
b_{i_jj} = c_{i_jj} - w_{i_j} + \varepsilon
\geq p_j + x_{i_j} - w_{i_j} + \varepsilon
\geq p_j + \varepsilon.
\end{equation}
Thus, in the general auction the lowest priced claim on the current claim 
list corresponding to $S(j)$ will be removed.

Assume without loss of generality that the order of the claims in the claim 
list of the general auction matches the sorted order of the expenses determined 
in the AUCTION--SO algorithm.
Then the lowest priced claim corresponds to the object claimed by person 
$k$.
As shown in the bidding phase, the quantity claimed by $i$ is 1, so both 
methods add 1 to $D_k$.
Removing the claim $(k;\,b_{kj},\,1)$ from the 
claim list corresponding to $S(j)$ is equivalent to removing $(k,\,j;\,1)$ from 
$T$.

Appending $(i_j,\,j;\,1)$ to $T$ is equivalent to inserting the bid 
$(i_j;\,b_{i_jj},\,1)$ into the claim list corresponding to $S(j)$,
and both methods subtract 
1 from $D_{i_j}$.
The price update for the general auction is the same as determining the 
lowest-priced object in $S(j)$ after the price increase on object $j$.
Thus, the two claims phases are equivalent, and so the two auction algorithms 
are 
equivalent.
\end{proof}

\section{Numerical results}\label{sn:numerical}
We created two different versions of the general auction.
The first is integer-based and stores all arc costs; we used it for comparison 
with existing auction methods.
The second uses floating-point numbers and computes arc costs as needed;
we used it to approximate time and memory scaling for the general auction.

We implemented four additional methods for testing our 
numerical results: extended method's AUCTION, AUCTION--SO, and AUCTION--SOP 
algorithms, and the network simplex method.
The assignment and extended auction methods were used for benchmarking and 
comparison, while the network simplex method was used to test real-valued
solutions for optimality.\footnote{The network simplex method is not generally useful
for benchmarking. The algorithm must be initialized by choosing
an initial feasible solution, and the resulting time and storage are extremely
dependent on that choice of initialization. See~\cite{Kovacs2015a} for details on the range
of network simplex performance benchmarks using various libraries and scenarios.}.

All implementations were written in \CC{}, and rely heavily on the \CC{}11 
Standard Library.
We wrote, compiled, and tested all the programs ourselves, in order to minimize 
possible confounds in our results.
As shown below, the AUCTION--SOP algorithm generated the most favorable results 
in our tests on non-assignment problems, so we focused on comparisons involving 
that algorithm.

All of these methods were implemented to solve the general transport problem as 
described in \cref{sn:trans_prob}, applying only those restrictions 
necessary to satisfy the minimal requirements of the algorithm.
One could improve on our results for any particular method, simply by 
customizing the code to handle specific purposes or environments.
However, our goal was to evaluate the average-case effectiveness of the 
underlying methods themselves, aside from any potential time savings due to 
specialized design.

The transport problems we used for comparison testing were initialized by 
creating 
assignment problems with NETGEN~\cite{NETLIB2016a}. The only modifications 
to the NETGEN code were increased array sizes (to generate the large problems 
desired) and alterations to the input/output routines.
The network generation code was not altered.
When different weights or costs were desired, the existing nodes and 
arcs were modified using the random uniform distribution functions from the 
\CC{}11 Standard Library.

In order to obtain accurate comparisons, we used identical conditions for the 
creation and implementation of all programs, and ran all tests in large blocks.
To minimize variation due to underlying problem structure, we solved at least 
10 distinct transport problems for each result. Solution times are averages.
When comparing algorithms, our programs loaded the data and ran multiple 
algorithms in sequence, minimizing changes in conditions over time.
We also ran multiple iterations of each test to ensure that results were 
typical.

Computation times under one second were obtained by forcing multiple runs to
bring the total time over one second, and dividing the total by the number of 
runs to obtain an average.
For example, if a time of $0.9$ seconds is given, that indicates the 
algorithm was run ten times, and the total resulting time divided by ten.
In order to obtain accurate results over multiple runs, we wrote our programs 
to recreate all data structures and computations from scratch.
We also compared our multi-run results to single-run times, to ensure that our 
multi-run averages closely approximated the times given by single runs, which 
they did.

\subsection{Comparison of auction methods for assignment}
When \BC{} compared their implementation of the extended 
auction to the performance of the assignment auction on standard assignment 
problems~\cite[p.\ 92]{Bertsekas1989a}, they found that the additional overhead 
required by the extended auction measurably slowed computation time.
For this reason, we wished to see how the performance of the general auction 
compared to that of the assignment and extended auction methods when applied to 
assignment problems of increasing size.
The comparison done by \BC{} in 1989 used problems with 150 to 500 pairs of 
sources and sinks.
Given the improvements in computation power since that time, our comparison 
starts at 3000 sinks and ends at 10000 (with an equal number of sources at 
each size).
As in~\cite{Bertsekas1989a}, the number of arcs in each problem is 12.5\% of 
the maximum possible.
The cost range is also relevant, as it influences complexity 
scaling~\cite[p.\ 34]{Bertsekas1998a}.
We used the fixed cost range $C = 100$.
The resulting times are given in \cref{tb:as_time}. The SOP auction results are
shown in two columns: one displays times for the original algorithm described
in~\cite{Bertsekas1989a}, while the other is for an optimized version developed
by Dr.\ Walsh and publicly available via~\cite{Walsh2016b}.
Approximating the time complexity of each algorithm with the power regression 
$aN^b$, we find that the original AUCTION--SOP has $b=2.9$, but each of the 
other three algorithms has a nearly identical power: $b = 2.4$.

\begin{table}[htpb]
\centering
\caption{~~Time in seconds for assignment scaling\\$N$ sinks and $N$ sources, 
$N^2/8$ arcs}\label{tb:as_time}%
\setlength\tabcolsep{3pt}
\begin{tabular}{c r c | c r c c r c c r c c r c}
\multicolumn{3}{c |}{} &
\multicolumn{3}{c}{Assignment} &
\multicolumn{3}{c}{General} &
\multicolumn{3}{c}{SOP} &
\multicolumn{3}{c}{SOP} \\
\multicolumn{3}{c |}{\,\,\,$N$} &
\multicolumn{3}{c}{auction} &
\multicolumn{3}{c}{auction} &
\multicolumn{3}{c}{(Walsh)} &
\multicolumn{3}{c}{(original)} \\
\hline
&       & & &       & & &       & & &         & & &        & \\[-2ex]
&  3000 & & &  1.23 & & &  1.21 & & &  1.71 & & &   9.06 & \\
&  4000 & & &  2.65 & & &  2.68 & & &  3.52 & & &  18.63 & \\
&  5000 & & &  4.73 & & &  4.82 & & &  6.23 & & &  35.33 & \\
&  6000 & & &  6.96 & & &  7.06 & & &  8.82 & & &  60.55 & \\
&  7000 & & &  8.90 & & &  8.97 & & & 10.85 & & &  90.82 & \\
&  8000 & & & 12.60 & & & 12.64 & & & 15.98 & & & 132.93 & \\
&  9000 & & & 17.00 & & & 16.83 & & & 20.73 & & & 193.30 & \\
& 10000 & & \phantom{x}
 & 18.88 & &
 & 19.05 & &
 & 23.04 & & \phantom{x}
 & 249.03 &
\end{tabular}
\end{table}

Over all tested problem sizes, the computed times for the assignment and 
general auction methods are nearly identical, differing by less than 2\%.
The largest difference in the times required by these two methods was less 
than two-tenths of one second.
This result suggests that the overhead required for the implementation of claim 
lists 
scales acceptably for relatively simple problems such as assignments.

Storage requirements were dominated by the need to store an explicit cost value 
for each arc, and so were nearly identical for all three algorithms. Storage 
scaled quadratically with respect to the number of sinks.

As these nearly-identical assignment auction and general auction results indicate,
the additional computational overhead required to maintain claim lists is
negligible.
For assignment problems, the performance of the general auction method is
effectively identical to the classical assignment auction.

\subsection{Comparison of extended and general auction}
The experiments described in~\cite{Bertsekas1981a,Bertsekas1989a} rely
on homogeneity and asymmetry for their numerical examples.
We replicated variations of these conditions for our own tests, and considered
what happened when those assumptions were violated.
Due to the significant time delays associated with the original version of the
SOP auction, all SOP tests were performed using the optimized Walsh SOP auction
published in~\cite{Walsh2016b}.

For the assignment auction, we used NETGEN to construct 
systems with $400$ sinks and $N$ sources.
Each of the $N$ sources has unit weight. The $400$ sinks have a 90\%/10\% 
weight 
distribution: 50\% of the weight is uniformly distributed over 90\% of the 
sinks, while the other 50\% is uniformly distributed over the other 10\%.
The density of the arcs is 14\%.
Computation times for all four auction algorithms, with various values of $N$, 
are given 
in \cref{tb:ha_cmp}.
Approximating the time complexity of each algorithm with the power regression 
$aN^b$ gives us the results shown in \cref{tb:ha_prg}.

\begin{table}[htpb]
\centering
\caption{~~Time in seconds for asymmetric problem\\ with unit weights \\ $400$ sinks 
with 90\%/10\% weights, \\$N$ sources with unit weight, 14\% arc 
density}\label{tb:ha_cmp}
\setlength\tabcolsep{3pt}
\begin{tabular}{c r c | c r c c r c c r c c r c}
\multicolumn{3}{c |}{} &
\multicolumn{3}{c}{General} &
\multicolumn{3}{c}{SOP} &
\multicolumn{3}{c}{SO} &
\multicolumn{3}{c}{Assignment} \\
\multicolumn{3}{c |}{$N$} &
\multicolumn{3}{c}{auction} &
\multicolumn{3}{c}{auction} &
\multicolumn{3}{c}{auction} &
\multicolumn{3}{c}{auction} \\
\hline
&       & & &      & & &      & & &       & & &       & \\[-2ex]
&  3000 & & & 0.16 & & & 0.13 & & &  1.45 & & &  1.38 & \\
&  6000 & & & 0.56 & & & 0.32 & & &  6.41 & & &  6.29 & \\
&  9000 & & & 1.06 & & & 0.67 & & & 15.51 & & & 13.30 & \\
& 12000 & & & 1.72 & & & 0.67 & & & 28.37 & & & 27.89 & \\
& 15000 & & & 2.37 & & & 0.99 & & & 40.13 & & & 39.82 & \\
& 18000 & & & 3.17 & & & 1.18 & & & 61.41 & & & 55.42 & \\
& 21000 & & & 3.80 & & & 1.45 & & & 80.09 & & & 86.85 & \\
& 24000 & \phantom{x} &
\phantom{x} & 5.34 & &
& 1.80 & &
& 118.66 & &
\phantom{x} & 114.38 &
\end{tabular}
\end{table}

\begin{table}[htpb]
\centering
\caption{~~Power regression $aN^b$ for asymmetric problem with unit 
weights}\label{tb:ha_prg}
\setlength\tabcolsep{3pt}
\begin{tabular}{c | c l c c l c c r c c r c}
$aN^b$ &
\multicolumn{3}{c}{General} &
\multicolumn{3}{c}{SOP} &
\multicolumn{3}{c}{SO} &
\multicolumn{3}{c}{Assignment} \\
coefficients &
\multicolumn{3}{c}{auction} &
\multicolumn{3}{c}{auction} &
\multicolumn{3}{c}{auction} &
\multicolumn{3}{c}{auction} \\
\hline
    & &      & & &      & & &       & & &       & \\[-2ex]
$a$ & & 0.029 & & & 0.036 & & & 0.15 & & & 0.14 & \\
$b$ & & 1.63  & & & 1.22  & & & 2.07 & & \phantom{xx} & 2.10 &
\end{tabular}
\end{table}

The assignment problem results given in \cref{tb:as_time} illustrate what
happens when we apply the AUCTION--SOP to a homogeneous problem without 
asymmetry.
To evaluate what happens with an asymmetric problem that is not homogeneous, we
implemented the same $400 \times N$ problem as above, but gave the $N$ sources 
weights in 
the integer range $[1,\,19]$.
We used a uniform integer distribution, giving us an average source weight of 
10.
The sinks kept the 90\%/10\% weight distribution.
Even this slight increase in weight resulted in a significant change in the 
relationship between the general auction and AUCTION--SOP, as shown in
\cref{tb:na_cmp}.

\begin{table}[htpb]
\centering
\caption{~~Time in seconds for asymmetric problem \\ with increased weights\\$400$ 
sinks with 90\%/10\% weights, \\ $N$ sources with $[1,\,19]$ weights, 14\% arc 
density}\label{tb:na_cmp}
\begin{tabular}{c r c | c r c c r c}
\multicolumn{3}{c |}{} &
\multicolumn{3}{c}{General} &
\multicolumn{3}{c}{SOP} \\
\multicolumn{3}{c |}{$N$} &
\multicolumn{3}{c}{auction} &
\multicolumn{3}{c}{auction} \\
\hline
&       & & &       & & &      & \\[-2ex]
&  3000 & & &  1.15 & & &  1.88 & \\
&  6000 & & &  2.64 & & &  4.89 & \\
&  9000 & & &  5.40 & & &  8.06 & \\
& 12000 & & &  6.82 & & & 12.18 & \\
& 15000 & & &  8.93 & & & 19.12 & \\
& 18000 & & & 12.08 & & & 25.13 & \\
& 21000 & & & 14.44 & & & 35.58 & \\
& 24000 & \phantom{x} & \phantom{x} & 17.73 & & & 43.81 &
\end{tabular}
\end{table}

Approximating the time complexity of each algorithm with the power regression 
$aN^b$ gives us:
\begin{align*}
a &= 0.27, & b &= 1.31 & \, &\text{ for the general auction, and } \\
a &= 0.32, & b &= 1.51 & \, &\text{ for the AUCTION--SOP. }
\end{align*}

Generally speaking, the transport-to-assignment transformation used by the 
algorithms of the extended auction method degrades performance on transport 
problems with arbitrarily large total weights.
To assess the overall impact, we generated a small transport problem and scaled 
only the total weight, comparing the time required by the extended and general 
auction methods.

In every case, the structure of the transport problem was identical.
It had 500 sources, 500 sinks, and the same 225000 arcs (90\% of maximum).
The cost range was $C=100$.
Using a uniform integer distribution, the weight was spread among the sources 
and sinks.
Each was given a minimum weight of one, but no maximum was assumed.
The total weights and computation times for the two algorithms are given 
in \cref{tb:wt_scale}.
\begin{table}[htpb]
\centering
\caption{~~Results for weight-only scaling\\500 sinks and 500 sources, 
225000 arcs}\label{tb:wt_scale}
\setlength\tabcolsep{3pt}
\begin{tabular}{c r c | c r c c | c r c c}
\multicolumn{3}{c |}{} &
\multicolumn{4}{c |}{Time (sec)} &
\multicolumn{4}{c}{Storage (MB)} \\
\cline{4-11}
     & & &      & &        & &       & &       \\[-2ex]
\multicolumn{3}{c |}{Weight} &
\multicolumn{3}{c}{General} &
Extended &
\multicolumn{3}{c}{General} &
Extended \\
\multicolumn{3}{c |}{$L$} &
\multicolumn{3}{c}{auction} &
auction &
\multicolumn{3}{c}{auction} &
auction \\
\hline
&      & & &       & & & & & & \\[-2ex]
&  500 & & & 0.09 & & \phantom{11}0.11 & & 10.78 & & 10.72 \\
&  600 & & & 0.15 & & \phantom{11}6.35 & & 10.79 & & 11.78 \\
&  700 & & & 0.16 & & \phantom{1}17.72 & & 10.80 & & 12.81 \\
&  800 & & & 0.13 & & \phantom{1}23.16 & & 10.81 & & 13.84 \\
&  900 & & & 0.15 & & \phantom{1}24.85 & & 10.81 & & 14.88 \\
& 1000 & & & 0.15 & & \phantom{1}29.65 & & 10.82 & & 15.91 \\
& 2000 & & & 0.25 & & \phantom{1}51.44 & & 10.88 & & 26.28 \\
& 3000 & & & 0.31 & & \phantom{1}65.29 & & 10.97 & & 36.70 \\
& 4000 & & & 0.40 & & \phantom{1}82.20 & & 11.00 & & 47.02 \\
& 5000 & & & 0.47 & & 102.07 & & 11.08 & & 57.54
\end{tabular}
\end{table}

The data collected suggests that, as weight increases, time for the 
AUCTION--SOP 
algorithm scales on the order of $\mathcal{O}(L^b)$, where $b=1.73$ and
$L$ is the total transport weight.
Time increases for the general auction algorithm as well, but it appears to be 
scaling with $b = 0.63$.
The storage requirements for the AUCTION--SOP scale almost linearly: 
$\mathcal{O}(L^b)$, where $b=0.74$.
This is not surprising, given that the AUCTION--SOP algorithm explicitly 
transforms the set of sinks into $L$ persons.
Storage requirements for the general auction are approximately constant.

The time increases given by both algorithms suggest that the weight range, or 
number of possible weight values, may be relevant to complexity calculations.
The impact of increasing weight was not mentioned in \cite{Bertsekas1989a}, and 
weight changes do not occur in the assignment problem.
However, it seems natural to consider such changes in the context of the 
transport problems.
The larger the weight range, the more likely it becomes that each 
sink will have positive flows over multiple arcs, increasing the complexity of 
the optimal transport plan, and thus the time required to calculate it.

Given the degree to which range increases adversely affect the performance of 
the extended auction method, we also compared the scaling behaviors of the 
AUCTION--SOP and general auction methods in problems where the relative total 
weight was fixed.
We randomly generated transport problems with $N$ sources and $N$ sinks, where 
the total weight of each problem was $L = 2N$.
The number of arcs for each problem was fixed at 90\% of maximum,
and the cost range was $C=100$.
The results are shown in \cref{tb:fx_scale}.
\begin{table}[htpb]
\centering
\caption{~~Results for fixed weight-ratio scaling\\$N$ sinks and $N$ 
sources, $0.9N^2$ arcs, total weight $2N$}\label{tb:fx_scale}
\setlength\tabcolsep{3pt}
\begin{tabular}{r c | c r c c | c r c c}
\multicolumn{2}{c |}{} &
\multicolumn{4}{c |}{Time (sec)} &
\multicolumn{4}{c}{Storage (MB)} \\
\cline{3-10}
     & & &      & &        & &       & &       \\[-2ex]
\multicolumn{2}{c |}{} &
\multicolumn{3}{c}{General} &
Extended &
\multicolumn{3}{c}{General} &
Extended \\
\multicolumn{2}{c |}{$N$\phantom{x}} &
\multicolumn{3}{c}{auction} &
auction &
\multicolumn{3}{c}{auction} &
auction \\
\hline
     & & &      & &        & &       & &       \\[-2ex]
 500 & & & 0.13 & & \phantom{13}8.32 & & 10.82 & & 15.91 \\
 600 & & & 0.19 & & \phantom{1}37.28 & & 20.15 & & 27.53 \\
 700 & & & 0.31 & & \phantom{1}63.27 & & 25.23 & & 35.26 \\
 800 & & & 0.42 & & 177.86 & & 30.77 & & 43.86 \\
 900 & & & 0.61 & & 233.96 & & 36.48 & & 53.05 \\
1000 & & & 0.81 & & 291.29 & & 42.89 & & 63.36 \\
1100 & & \phantom{x} & 1.08 & & 920.57 & & 70.52 & & 95.38
\end{tabular}
\end{table}

The time required by the general auction method scales like $\mathcal{O}(N^b)$ 
with $b = 2.69$, whereas the extended auction scales with $b = 5.40$.
Because of the need to store an explicit cost value for each arc, both methods 
scale quadratically with respect to storage.
At any given size, the extended auction uses approximately 
150\% of the amount of 
storage needed for the general auction method.
The algorithm uses the additional storage to explicitly transform sources into 
objects.

\subsection{General auction performance on real-valued transport}
\label{sn:nm_real}
When evaluating the scaling properties of the real-valued general 
auction method, we wanted to focus on its behavior as problem size increased, 
with all other variables held constant.
To do so, we needed to fix the ranges of both costs and weights.
We constructed transport problems using the following method:
\begin{itemize}
\item
We generate $N$ sinks and an equal number of sources.
Sources and sinks are points in the plane, restricted to the integer 
coordinates on the cube $[0,\,N] \times [0,\,N]$.
Points are placed randomly using the uniform distribution.
\item
The graph underlying our transport problem is
given by
the complete bipartite graph 
$\mathbb{K}_{N,\,N}$, so the problem is maximally dense with $N^2$ arcs.
The cost function is a modification of the Euclidean distance.
Given sink $i = (x_i,\,y_i)$ and source $j = (x_j,\,y_j)$, the cost of the arc 
connecting them is
\begin{multline}
c_{ij} = \sqrt{1 + m_{ij}}, 
\quad\text{ where } \\
m_{ij} = \left[(x_i-x_j)^2 + (y_i-y_j)^2\right] \pmod{20}.
\end{multline}
\item
For $i \in \set{ 1,\,\ldots,\,N}$, the weight of sink $i$ is given by the 
formula
\begin{equation}
d_i = \sqrt{1+m_i}
\quad\text{ where }
m_{i} = [i-1]\pmod{20}.
\end{equation}
and for $j \in \set{ 1,\,\ldots,\,N}$, the weight of source $j$ is given by the 
formula
\begin{equation}
s_j = \sqrt{1+m_j}
\quad\text{ where }
m_{j} = [j-1]\pmod{20}.
\end{equation}
\end{itemize}
We chose this method because it populates the weights and costs with irrational 
values, while restricting the range of those values.
Because the total weight of the sinks equals the total weight of the sources, 
and the transport graph is complete, the problem is guaranteed to be feasible.

The integers are closed under addition and subtraction, so for an integral 
transport problem the smallest deviation from the optimal cost cannot be less 
than one.
Furthermore, the range of possible flow values cannot be greater than the 
largest absolute weight.
Neither of these is true for real-valued problems.
To get a sense of how real-valued data affects these characteristics, consider 
the cost and weight values chosen for our sample problems.
Even without weight splitting, it is possible to make a flow adjustment on the 
smallest possible cycle, four arcs, that changes the cost by
\begin{equation*}
 1 \cdot \left(\sqrt{19} - \sqrt{20} + \sqrt{19} - \sqrt{18} \right)
 \approx 0.00302.
\end{equation*}
From this, one can extrapolate the size of the range.
Increasing the number of arcs in the cycle would further increase the range 
and reduce the minimum possible cost.

As the flow adjustment above indicates, cost and weight ranges are determined,
not by the 20 distinct irrational values given,
but by the number of possible differences that those values can generate.
Theoretically, there could be as many as $\mathcal{O}(V^N)$ differences, where 
$V$ is the number of distinct cost/weight values in the problem. The smallest 
possible cost adjustment could be significantly less than machine precision.

In fact, the cost and weight ranges seemed to have far less negative impact on 
computation than the description above would suggest.
We able to generate optimal solutions in a single iteration of the general 
auction using $\varepsilon = 0.75$.
The results of our tests are given in \cref{tb:ga_optnet}.

\begin{table}[htpb]
\centering
\caption{~~Real-valued general auction results\\$N$ sinks and $N$ sources, 
$N^2$ arcs}\label{tb:ga_optnet}
\begin{tabular}{r c | c r c | c r c}
\multicolumn{2}{c |}{} &
\multicolumn{3}{c |}{Time} &
\multicolumn{3}{c}{Storage} \\
\multicolumn{2}{c |}{$N$} &
\multicolumn{3}{c |}{(sec)} &
\multicolumn{3}{c}{(MB)} \\
\hline
      & & &        & & &       & \\[-2ex]
 1000 & & &   1.11 & & &  3.27 & \\
 2000 & & &   6.37 & & &  5.81 & \\
 3000 & & &  12.74 & & &  7.93 & \\
 4000 & & &  25.87 & & & 14.00 & \\
 5000 & & &  42.49 & & & 14.69 & \\
 6000 & & &  61.14 & & & 15.24 & \\
 7000 & & &  94.12 & & & 16.27 & \\
 8000 & & & 143.51 & & & 18.38 & \\
 9000 & & & 179.20 & & & 25.82 & \\
10000 & & & 218.59 & & & 29.16 &
\end{tabular}
\end{table}

As described above, it can be difficult to determine \emph{a priori} minimum 
$\varepsilon$ values 
that guarantee optimal solutions.
For these tests, we confirmed the optimality of our solutions \emph{a 
posteriori}, 
by comparing the results of the general auction to exact solutions
which were already known.\footnote{That is, the optimal costs, which
were known and unique, could be compared to the cost values we
obtained using the general auction.
Associated transport plans, while known, are not necessarily unique,
and so could not be meaningfully compared to those obtained by our method.}

Given our computer and the type of calculations performed, we have machine 
precision equal to
\begin{equation}
\sqrt{\mathtt{eps}} := 1.490116 \times 10^{-8}.
\end{equation}
The optimal cost values calculated by our general auction
matched the exact solution values up to the limit of machine precision.
Because the optimal costs all exceeded $10^4$, this gave us at least 11 
identical digits.

The time complexity with respect to the number of sinks (or sources) resembles:
\begin{equation*}
a N^b \text{ seconds, }
\quad
\text{ with }
\quad
a = 1.62 \times 10^{-7}
\text{ and }
b = 2.28.
\end{equation*}
When the network simplex method was applied to the same set of
dense, real-valued problems, a time scaling exponent of $b=2.42$ was obtained\footnote{Because exact
network simplex computation times are highly dependent on the choice of initialization, we can make
a meaningful comparison between scaling exponents $b$, but not multipliers $a$.}. 
Because our real-valued implementation of the general auction 
algorithm computed cost values as needed, storage complexity is 
roughly linear.

\section{Conclusions}
\label{sec:conclusions}
We introduced our general auction method for real-valued transport
because we wished to take advantage of the properties of auction algorithms while
overcoming some of the data restrictions imposed by their existing forms.
Our goal was to create an auction technique suitable for computing real-valued 
optimal transport solutions, particularly on asymmetric problems
such as those found in semi-discrete optimal transport.

As we have shown, the general auction allows us to take the same 
rationale as the original auction method and apply it directly to real-valued 
optimal transport problems.
The method is guaranteed to terminate after a finite number of iterations, and 
it offers \emph{a priori} error bounding.

Furthermore, as a discrete transport solver, the general auction method offers 
many practical advantages.
Extending it to parallel computing is relatively straightforward.
By applying \cref{th:ga_Leps}, it is possible to fine-tune the amount of 
computation to a predefined level of acceptable error.
This is a significant departure from most linear programming methods, which 
halt only when the exact solution is reached, and can be arbitrarily far from 
optimal when interrupted.
The results of the general auction include the information necessary 
to compute both transport solutions: the claim lists contain the flows needed 
for the primal solution, and the price vector allows approximation of the dual 
solution.
Together, these two solutions give improved \emph{a posteriori} error 
bounds, which can be used in ``on-the-fly'' $\varepsilon$-scaling schemes.

In the process of developing the general auction for real-valued transport,
we found that very few numerical resources existed for applying auction
algorithms to transport problems of any sort.
In an effort to alleviate this need, we created an open-source software
project that directly implements each of the auction algorithms described
in this paper, allowing them to be compared head-to-head. (A variation of
this software was used to generate our numerical results.)
Our software is publicly available for download at~\cite{Walsh2016b}.

The general auction should also be easy to extend.
In this regard, the efforts of \BC{} provide a road map for the type of 
extensions that may be possible.
In particular, a reverse general auction algorithm seems feasible, as 
does a method for solving partial transport problems.
Given the current interest in fast, accurate techniques for solving optimal 
transport problems, we invite further exploration of the general auction method.

\bibliography{ldjw2016ref}
\end{document}